\documentclass{article}

\title{Fixing the Kawarabayashi-Thomas-Wollan Flat Wall}
\author{Dan Arnon}

\usepackage{geometry}
\usepackage{amsmath}
%ams math and amsthm create conflicting definitions of the proof environment.

\usepackage{amsthm}
\usepackage{amsopn}
\usepackage{parskip}   % this package puts a vertical space between (unindented) paragraphs, rather than indent non-leading paragraphs (without using vspace)
\usepackage{lmodern}
\usepackage{amssymb}
\usepackage{tikz}
\usepackage{subcaption}
\usepackage{enumerate}
\usepackage{appendix}
\usepackage{mathtools}

\usetikzlibrary{patterns}

%%% theorems and definitions %%%
\theoremstyle{plain}
\newtheorem*{defnFlat}{Definition}

\newtheorem*{thm52}{Theorem 5.2}
\newtheorem{lem}{Lemma}
\newtheorem*{lem*}{Lemma}
\newtheorem*{lem51}{Lemma 5.1'}

\newtheorem{claim}{Claim}
\newtheorem{defn}{Definition}

%%% Custom Operators %%%
\DeclareMathOperator{\tr}{\operatorname{tr}}
\DeclareMathOperator{\bd}{\operatorname{bd}}

%%% Math Shortcuts %%%
\newcommand{\sphere}[1]{\mathbb{S}^{#1}}
\newcommand{\diskin}[1]{\Delta_{#1}^{\text{in}}}
\newcommand{\diskout}[1]{\Delta_{#1}^{\text{out}}}
\begin{document}

\maketitle

\begin{abstract}
Recent papers by Kawarabayashi, Thomas and Wollan (\cite{NewProof, QuicklyExcluding}) provide major improvements over Robertson and Seymour's original proof of the structure theorem for finite graphs that exclude a given graph (\cite{RSXVI}).
This structure theorem constitutes a central step in the proof of the Wagner Conjecture. The new papers provide a significant reduction of the size bounds in the theorem as well as providing a simpler, shorter and more accessible proof.
The first paper \cite{NewProof} gives a new proof of the Flat Wall Theorem, a central stepping stone to the proof of the structure theorem itself in \cite{QuicklyExcluding}.
More than that, the paper redefines an important notion, that of a {\em flat wall}. Unfortunately, this new notion is too strong. As a result, the new Flat Wall Theorem (Theorem 5.2 in \cite{NewProof}) is incorrect.
I give a counterexample in Appendix \ref{CounterExample}.
A follow-on lemma in the same paper (Lemma 6.1 in \cite{NewProof}, about the transitivity of flatness) is also incorrect, a fact that was noticed by Dimitrios Thilikos et al in \cite{MoreAccurate}.
However, those authors appear to have missed the main issue, which is Theorem 5.2. Nevertheless, their notion of a {\em tight rendition} is a crucial ingredient of the fix to the main problem as presented here.

This paper provides a weaker definition of the notion of a flat wall, provides a correction to the proof of the Flat Wall Theorem and a new proof of flatness transitivity.
The notion of a tight rendition as presented here differs a little from \cite{MoreAccurate} but is defined much more simply, and the notion of a proper cycle is introduced.
The notions of certificates and tilted walls in \cite{MoreAccurate} turn out of be unnecessary and transitivity is preserved in its original simplicity and generality.
Most importantly, it looks like the new weaker definition of flatness is all that is really necessary to carry through the structure theorem in \cite{QuicklyExcluding}.
\end{abstract}

\newpage
\tableofcontents

\section{Preliminaries}

This section is mostly a review of some basic concepts of Robertson and Seymour's graph minor theory, but it also includes an explanation of some terminology choices I made in this paper.

\subsection{Loops in $\sphere{2}$}
We start with some terminology and basic facts about subsets of the unit 2-sphere $\sphere{2}$ that are homeomorphic to the unit 1-sphere $\sphere{1}$.
We refer to them as {\em loops} though technically they are {\em simple} loops since they do not have self-intersections.

A loop $L \in \sphere{2}$ divides the sphere into two closed regions $\Delta_L^0$ and $\Delta_L^1$, both homeomorphic to a closed disk with intersection $\Delta_L^0 \cap \Delta_L^1 = L$.

Given two loops $L$ and $L'$, we say that $L$ and $L'$ are {\em non-crossing} if $L \subset \Delta_{L'}^0$ or $L \subset \Delta_{L'}^1$.
Let $L$ and $L'$ be non-crossing. Without loss of generality, we can assume $L \subset \Delta_{L'}^0$.
Since $\Delta_{L'}^0$ is simply connected, we have one region of $L$ that is contained in $\Delta_{L'}^0$. Say $\Delta_L^0 \subseteq \Delta_{L'}^0$.

The converse is also true. If $\Delta_L^{i} \subset \Delta_{L'}^j$ for some $i, j \in \{ 0, 1 \}$ then $L \subset \Delta_{L'}^j$ and so $L$ and $L'$ are non-crossing.
Since $\Delta_L^i \subset \Delta_{L'}^j$ implies $\Delta_{L'}^{1 - j} \subset \Delta_L^{1 - i}$, the non-crossing relationship is symmetric.

\begin{lem}\label{lem:NCPair4Options}
Let $L$ and $L'$ be non-crossing and let $i, i' \in \{ 0, 1 \}$. Then either:
\begin{align*}
\Delta_L^i \subseteq \Delta_{L'}^{i'} & \quad \text{or} \quad \Delta_{L'}^{i'} \subseteq \Delta_L^i \quad \text{or} \\
\Delta_L^i \cap \Delta_{L'}^{i'} \subseteq L \cap L' & \quad \text{or} \quad \Delta_L^i \cup \Delta_{L'}^{i'} = \sphere{2}
\end{align*}
\end{lem}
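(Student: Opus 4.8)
The plan is to reduce to the normalized configuration $\Delta_L^0 \subseteq \Delta_{L'}^0$ and then just run through the four values of $(i,i')$. By the paragraph preceding the lemma, any pair of non-crossing loops admits a labelling of the two regions of $L$ and of the two regions of $L'$ for which $\Delta_L^0 \subseteq \Delta_{L'}^0$; and since the statement to be proved is quantified over all $i,i' \in \{0,1\}$, this relabelling costs nothing (relabelling a loop's regions merely permutes $\{0,1\}$, so proving all four cases under the convenient labelling proves all four cases in general). I will also use the equivalent inclusion $\Delta_{L'}^1 \subseteq \Delta_L^1$, which was already recorded before the lemma.

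The one genuinely topological input I would isolate up front is the observation that $\mathrm{int}(\Delta_L^0) \cap \Delta_{L'}^1 = \varnothing$. This follows from two elementary point-set facts: taking interiors is monotone, so $\Delta_L^0 \subseteq \Delta_{L'}^0$ gives $\mathrm{int}(\Delta_L^0) \subseteq \mathrm{int}(\Delta_{L'}^0)$; and for a single loop $M$ the two closed disks meet exactly in their common topological boundary $M$, so in particular $\mathrm{int}(\Delta_{L'}^0) \cap \Delta_{L'}^1 = \varnothing$. Alongside this I would record the bookkeeping identities $\Delta_L^0 = L \cup \mathrm{int}(\Delta_L^0)$ and $\sphere{2} \setminus \Delta_L^1 = \mathrm{int}(\Delta_L^0)$, both immediate from $\Delta_L^0 \cup \Delta_L^1 = \sphere{2}$ together with $\Delta_L^0 \cap \Delta_L^1 = L = \partial \Delta_L^0$.

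With these in hand the four cases become mechanical. For $(i,i')=(0,0)$ and $(1,1)$ the first and second alternatives hold by the normalization itself. For $(0,1)$: intersecting $\Delta_L^0 = L \cup \mathrm{int}(\Delta_L^0)$ with $\Delta_{L'}^1$ and using the disjointness above deletes the interior part, leaving $\Delta_L^0 \cap \Delta_{L'}^1 = L \cap \Delta_{L'}^1 \subseteq L$; and $\Delta_L^0 \subseteq \Delta_{L'}^0$ forces $\Delta_L^0 \cap \Delta_{L'}^1 \subseteq \Delta_{L'}^0 \cap \Delta_{L'}^1 = L'$, so the intersection lies in $L \cap L'$ — the third alternative. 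For $(1,0)$: the complement of $\Delta_L^1 \cup \Delta_{L'}^0$ in $\sphere{2}$ is $(\sphere{2}\setminus\Delta_L^1)\cap(\sphere{2}\setminus\Delta_{L'}^0) = \mathrm{int}(\Delta_L^0) \cap \mathrm{int}(\Delta_{L'}^1)$, which is empty by the same disjointness, so $\Delta_L^1 \cup \Delta_{L'}^0 = \sphere{2}$ — the fourth alternative.

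I do not anticipate a real obstacle. The only things to be careful about are that the four listed alternatives genuinely correspond to the four cases $(0,0),(1,1),(0,1),(1,0)$ under the chosen labelling, and that the normalization is legitimate because the hypothesis of non-crossing is insensitive to which region of each loop is called $0$. It is also worth a remark that nothing in the argument assumes $L$ and $L'$ are disjoint or that any of the disk inclusions are strict, so degenerate configurations — $L=L'$, or $L$ and $L'$ sharing a boundary arc — are handled automatically by the same case analysis.
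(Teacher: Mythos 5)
Your proposal is correct and follows essentially the same strategy as the paper: use the non-crossing hypothesis to obtain a base inclusion $\Delta_L^j \subseteq \Delta_{L'}^k$ (you normalize to $j=k=0$; the paper keeps $j,k$ abstract) and then dispose of the four cases of $(i,i')$, matching them to the four listed alternatives. The only cosmetic difference is that in the two nontrivial cases you argue via interiors and the disjointness $\mathrm{int}(\Delta_L^0)\cap\Delta_{L'}^1=\varnothing$, while the paper uses direct inclusion chains such as $\Delta_L^j\cap\Delta_{L'}^{1-k}\subseteq\Delta_{L'}^k\cap\Delta_{L'}^{1-k}=L'$; these are interchangeable.
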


\begin{proof}
Since $L$ and $L'$ are non-crossing, we have some $j, k \in \{ 0, 1 \}$ such that $\Delta_L^j \subseteq \Delta_{L'}^k$.
It follows that $\Delta_{L'}^{1 - k} \subseteq \Delta_L^{1 - j}$.

If $j = i$ and $k = i'$ we are done due to the first inclusion, and if $j \ne i$ and $k \ne i'$ we are done due to the second inclusion. If $j = i$ and $k \ne i'$, then
\begin{alignat*}{2}
\Delta_L^i \cap \Delta_{L'}^{i'} & = \Delta_L^j \cap \Delta_{L'}^{1 - k} & & \subseteq \Delta_{L'}^k \cap \Delta_{L'}^{1 - k} = L' \\
\Delta_L^i \cap \Delta_{L'}^{i'} & = \Delta_L^j \cap \Delta_{L'}^{1 - k} & & \subseteq \Delta_L^j \cap \Delta_L^{1 - j} = L
\end{alignat*}
Finally if $j \ne i$ and $k = i'$ then
$$
\Delta_L^i \cup \Delta_{L'}^{i'} = \Delta_L^{1 - j} \cup \Delta_{L'}^k \supseteq \Delta_{L'}^{1 - k} \cup \Delta_{L'}^k = \sphere{2}
$$
\end{proof}

\subsection{Societies and Renditions}

The definitions of "painting" and "rendition" below are borrowed from \cite{NewProof} with a few corrections and superficial changes that make these notions easier to work with.

\subsubsection{Paintings and orientations}
Let $\mathbb{S}^2$ be the unit 2-sphere in $\mathbb{R}^3$.
A {\em bounded painting} in $\mathbb{S}^2$ is a quadruple $(\mathcal{N}, \bar{\mathcal{C}}, \bar{\star}, \tau)$ such that
\begin{itemize}
\item $\mathcal{N} \subset \mathbb{S}^2$ is a finite set of points, called the {\em nodes}  of the painting.
\item $\bar{\mathcal{C}}$ is a finite family of subsets of $\mathbb{S}^2$ each homeomorphic to a closed disk and $\bar{\star} \in \bar{\mathcal{C}}$ is a distinguished disk called the {\em external disk}.
\item Let $U = \bigcup\limits_{u \in \bar{\mathcal{C}}} u$. Then $\mathcal{N} \subset \bd(U)$ and the sets $\mathcal{C} \coloneqq \{ u \setminus \mathcal{N} \vert u \in \bar{\mathcal{C}} \}$
are the connected components of of $U \setminus \mathcal{N}$.
The members of $\mathcal{C}$ are called the {\em cells} of the painting and the distinguished cell $\star \coloneqq \bar{\star} \setminus \mathcal{N}$ is called the {\em external} cell.
We call all the other cells {\em internal}.
For each cell $c = u \setminus \mathcal{N}$, define $\tilde{c} = u \cap \mathcal{N}$.
\item  For every internal cell $c$, $\vert \tilde{c} \vert \le 3$.
As a result if $n_1, n_2$ are distinct nodes in $\tilde{c}$ then at least one of the two open segments of $\bd(c)$ created by $n_1, n_2$ is $\mathcal{N}$-free.
$\tau$ is a function (called the tie-breaker) that associates each such triple $c, n_1, n_2$ to an $\mathcal{N}$-free open segment $\tau(c, n_1, n_2)$ of the boundary of $c$.
\item The function $\tau$ is unoriented. For all internal $c$ and distinct $n_1, n_2 \in \tilde{c}$,
$$
\tau(c, n_1, n_2) = \tau(c, n_2, n_1)
$$
\end{itemize}

It follows immediately from the definition that the disks in $\bar{\mathcal{C}}$ are mutually almost disjoint and only touch at a finite number of nodes along their boundaries.
Pick an orientation of $\Delta \coloneqq \overline{\mathbb{S}^2 \setminus \star}$. The orientation establishes a notion of right and left when traversing a path in $\Delta$.

For a loop $\gamma$ in $\Delta$, define the {\em clockwise} orientation of $\gamma$ to be the direction of travel that keeps the interior region of $\gamma$ on the right.
In particular the boundaries $\bd(c)$ of cells $c \in C(\Gamma)$ have a distinguished clockwise orientation. These orientations induce a circular order on the sets $\pi(\tilde{c})$.

\begin{defn}
Let $\Delta \subset \mathbb{S}^2$ be an oriented disk and let $L \subset \Delta$ be a loop.
The loop $L$ divides $\mathbb{S}^2$ into two closed regions, both homeomorphic to a disk, which we denoted by $\Delta_L^0$ and $\Delta_L^1$.
$\Delta$ is simply connected and therefore exactly one of these region, denoted $\diskin{L}$, is a subset of $\Delta$.
The other region is denoted $\Delta_L^{\text{out}}$.
\end{defn}

If we endow the loop $L$ with a clockwise orientation, then the interior of $\diskin{L}$ will be on the right as we traverse $L$.
If a loop $L$ comes with a pre-determined arbitrary orientation, then one of its two regions will be on the right in the direction of travel.
We denote that region by $\Delta_{\overrightarrow{L}}$, and its complementary region by $\Delta_{\overleftarrow{L}}$.
It follows that if $L$ is oriented, then its orientation is clockwise if and only if $D_{\overrightarrow{L}} = \diskin{L}$.

Given an oriented simple loop $L \subset \Delta$ and two distinct points $x, y \in L$,
the segment $L[x, y]$ is defined to be the segment of $L$ that one would traverse when traveling in $L$ from $x$ to $y$ in the direction given by the orientation.
It follows that $L = L[x, y] \cup L[y, x]$.

%%%%% start stronger lemma 2 %%%%%
\begin{lem}
\label{lem:twononcrossingloops}
Let $\Delta \subset \mathbb{S}^2$ be an oriented disk, 
and let $L_0, L_1  \subset \Delta$ be two non-crossing loops.
let $k \in \{ 2, 3 \}$ and let $n_1, \dots, n_k \in L_0$ be distinct points listed in $L_0$-clockwise order.
Let $X$ be a union of  some of the open segments $L_0(n_1, n_2), \dots, L_0(n_k, n_1)$.

Assume that $\diskin{L_0} \cap \diskin{L_1} = \{ n_1, \dots, n_k \} \cup X$ (see for example Figures \ref{fig:3noncross0} and  \ref{fig:2noncross1}.)
For $1 \le m \le k$ and $p = 1 +  (m \mod k)$ such that $L_0(n_m, n_p) \not\subseteq X$, define $Z_m$ to be the loop
$$
Z_m = L_0[n_m, n_p]  L_1[n_p, n_m] 
$$

Then
\begin{itemize}
\item each $Z_m$ is a simple loop that is non-crossing relative to $L_0$ and $L_1$.
\item some $\diskin{Z_m}$ contains both $\diskin{L_0}$ and $\diskin{L_1}$.
\end{itemize}
\end{lem}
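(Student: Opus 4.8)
The plan is to extract the topological content of the intersection hypothesis first, then dispatch the ``simple loop'' and ``non‑crossing'' assertions by a short direct argument, and finally prove the containment assertion by a two‑stage argument: an orientation argument that handles \emph{every} admissible $m$ at once, followed by a parity argument that selects the one good $m$. For the preliminaries: since $\diskin{L_0}\cap\diskin{L_1}=\{n_1,\dots,n_k\}\cup X$ is contained in the loop $L_0$ it has empty interior in $\sphere{2}$, so $\operatorname{int}\diskin{L_0}$ and $\operatorname{int}\diskin{L_1}$ are disjoint; a point of $L_0$ lying in $\operatorname{int}\diskin{L_1}$ would contradict this, so $\{n_1,\dots,n_k\}\cup X\subseteq L_1$ and hence $L_0\cap L_1=\{n_1,\dots,n_k\}\cup X$. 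If $L_1\subseteq\diskin{L_0}$ then $\diskin{L_1}\subseteq\diskin{L_0}$, forcing the disk $\diskin{L_1}$ to coincide with the interior‑free set $\{n_1,\dots,n_k\}\cup X$; so, by non‑crossing, $L_1\subseteq\diskout{L_0}$, and symmetrically $L_0\subseteq\diskout{L_1}$.

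For the ``simple loop'' and ``non‑crossing'' parts: fix an admissible $m$ and write $q=n_p$. The arc $L_0[n_m,q]$ is one of the segments and is not contained in $X$, so its interior $L_0(n_m,q)$ is disjoint from $\{n_1,\dots,n_k\}\cup X=L_0\cap L_1$, hence disjoint from $L_1$; therefore $L_0[n_m,q]$ meets the sub‑arc $L_1[q,n_m]$ only in $\{n_m,q\}$, and $Z_m$ is a simple loop. Both arcs of $Z_m$ lie in $L_0\cup L_1\subseteq\diskout{L_0}$ and in $L_0\cup L_1\subseteq\diskout{L_1}$, so $Z_m$ is non‑crossing relative to each of $L_0$ and $L_1$.

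For the containment assertion I would orient $L_0$ and $L_1$ clockwise and orient $Z_m$ by the concatenation. Along $L_0[n_m,q]$ this orientation agrees with the clockwise orientation of $L_0$, so $\diskin{L_0}$ lies to its right; along $L_1[q,n_m]$ it agrees with the clockwise orientation of $L_1$, so $\diskin{L_1}$ lies to its right. Since $\operatorname{int}\diskin{L_0}$ is connected, disjoint from $Z_m\subseteq L_0\cup L_1$, and meets $\operatorname{int}\Delta_{\overrightarrow{Z_m}}$ (just to the right of an interior point of $L_0[n_m,q]$), it lies in $\Delta_{\overrightarrow{Z_m}}$, and likewise $\diskin{L_1}\subseteq\Delta_{\overrightarrow{Z_m}}$. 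Thus $\diskin{L_0}\cup\diskin{L_1}\subseteq\Delta_{\overrightarrow{Z_m}}$ for \emph{every} admissible $m$, and it remains only to produce one admissible $m$ with $Z_m$ clockwise, i.e.\ with $\Delta_{\overrightarrow{Z_m}}=\diskin{Z_m}$; for that $m$ the disk $\diskin{Z_m}$ contains both $\diskin{L_0}$ and $\diskin{L_1}$.

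The selection of that $m$ is the step I expect to be the main obstacle, because it amounts to bookkeeping which of the two faces of $Z_m$ is the one lying inside $\Delta$. The plan here is to work inside the disk $\diskout{L_1}$: as $L_0\subseteq\diskout{L_1}$ and $L_0\cap L_1=\{n_1,\dots,n_k\}\cup X$ with $X\subseteq L_1=\bd(\diskout{L_1})$, every crossing of $L_0$ with $\operatorname{int}\diskout{L_1}$ lies on one of the pairwise almost‑disjoint chords $C_m:=L_0[n_m,n_p]$ ($m$ admissible). Such a $C_m$ splits $\diskout{L_1}$ into two disks, one of which, $D''_m$, has boundary $C_m\cup L_1[n_p,n_m]=Z_m$, the other, $D'_m$, having boundary $C_m$ together with the remaining $L_1$‑arc between $n_m$ and $n_p$; one checks $\sphere{2}\setminus\operatorname{int}D''_m=\diskin{L_1}\cup D'_m$, so the two faces of $Z_m$ are $D''_m$ and $\diskin{L_1}\cup D'_m$. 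Since $\Delta_{\overrightarrow{Z_m}}\supseteq\diskin{L_1}$ we must have $\Delta_{\overrightarrow{Z_m}}=\diskin{L_1}\cup D'_m$, and then $\diskin{L_0}\subseteq\Delta_{\overrightarrow{Z_m}}$ together with the disjointness of $\operatorname{int}\diskin{L_0}$ from $\diskin{L_1}$ and from $C_m$ gives $\diskin{L_0}\subseteq D'_m$. Hence $Z_m$ is clockwise exactly when $\sphere{2}\setminus\Delta$ lies in $D''_m$ rather than $D'_m$, that is, exactly when $\operatorname{int}\diskin{L_0}$ and $\sphere{2}\setminus\Delta$ lie on opposite sides of $C_m$ inside $\diskout{L_1}$. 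Both sets lie in the open disk $\operatorname{int}\diskout{L_1}$ (each is disjoint from $\diskin{L_1}$); joining a point of one to a point of the other by a path in $\operatorname{int}\diskout{L_1}$ in general position with $L_0$, the path runs from inside to outside the Jordan curve $L_0$ and avoids $L_1$, hence crosses the union of the $C_m$ an odd number of times, so it crosses some $C_{m^\ast}$ an odd number of times and its endpoints fall on opposite sides of $C_{m^\ast}$. That $m^\ast$ works: $Z_{m^\ast}$ is clockwise and $\diskin{Z_{m^\ast}}=\Delta_{\overrightarrow{Z_{m^\ast}}}\supseteq\diskin{L_0}\cup\diskin{L_1}$.
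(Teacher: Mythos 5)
Your proof is correct, and it takes a genuinely different route from the paper's. Both arguments share the same orientation observation (traversing $Z_m$ as the concatenation $L_0[n_m,n_p]\,L_1[n_p,n_m]$, the interiors $\diskin{L_0}$ and $\diskin{L_1}$ are encountered on the right of the respective subarcs), but you place it at the \emph{front} of the argument and use it to conclude that $\diskin{L_0}\cup\diskin{L_1}\subseteq\Delta_{\overrightarrow{Z_m}}$ for \emph{every} admissible $m$, reducing the whole problem to producing one $m$ for which $Z_m$ is clockwise. You then settle that existence by a parity argument inside the disk $\diskout{L_1}$: the admissible arcs $L_0[n_m,n_p]$ are the chords along which $L_0$ crosses $\operatorname{int}\diskout{L_1}$, a path from $\operatorname{int}\diskin{L_0}$ to $\sphere{2}\setminus\Delta$ in $\operatorname{int}\diskout{L_1}$ must cross $L_0$ an odd number of times, and hence crosses some single chord $C_{m^\ast}$ oddly; for that $m^\ast$ the exterior of $\Delta$ and $\operatorname{int}\diskin{L_0}$ are separated by $C_{m^\ast}$ inside $\diskout{L_1}$, which is exactly the condition for $Z_{m^\ast}$ to be clockwise. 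The paper instead invokes its Lemma~\ref{lem:NCPair4Options} to eliminate two of the four possible relations between $\diskin{Z_m}$ and $\diskin{L_i}$ (Step~1), rules out the last ``bad'' relation by showing the set $R=\diskin{L_0}\cup\diskin{L_1}\cup\bigcup\diskin{Z_m}$ would have empty boundary and hence equal $\sphere{2}$, contradicting $R\subseteq\Delta$ (Step~2), and only then deploys the orientation argument to upgrade containment of one $\diskin{L_i}$ to containment of both (Step~3). Your route avoids the four-option classification lemma and the boundary-of-a-union bookkeeping entirely, replacing them with a single Jordan-parity count; it also makes explicit the pleasant fact that the containment conclusion holds for \emph{all} admissible $m$ once $Z_m$ is given the right orientation, whereas the paper's Step~2 produces just one $m$ nonconstructively. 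The paper's version has the advantage of reusing Lemma~\ref{lem:NCPair4Options}, which it needs elsewhere anyway, and of staying entirely in the language of closed-disk inclusions without invoking transversality/general position for the counting path.
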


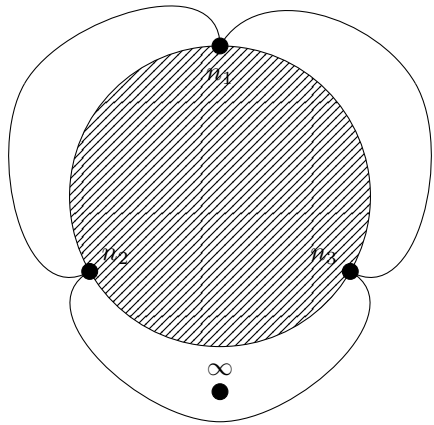
\begin{figure}[htb]
\centering
\begin{tikzpicture}
\draw[pattern = north east lines] (0, 0) circle (2);

\filldraw (1.732, -1) circle (3 pt);
\node at (1.386, -0.8) {$n_3$};
\filldraw (-1.732, -1) circle (3 pt);
\node at (-1.386, -0.8) {$n_2$};
\filldraw (0, 2) circle (3 pt);
\node at (0, 1.6) {$n_1$};

\draw (-1.732, -1) to[in = 180, out = 210] (0, -3);
\draw (0, -3) to[in = -30, out = 0] (1.732, -1);

\draw (1.732, -1) to[in = -60, out = -30] (2.598, 1.5);
\draw (2.598, 1.5) to[in = 60, out = 120] (0, 2);

\draw (0, 2) to[in = 60, out = 90] (-2.598, 1.5);
\draw (-2.598, 1.5) to[in = -150, out = -120] (-1.732, -1);

\filldraw (0, -2.6) circle (3 pt);
\node at (0, -2.3) {$\infty$};

\end{tikzpicture}
\caption{$k = 3$, $X = \emptyset$. $\diskin{L_0}$ is lined. $\diskin{L_1}$ is the outer region. $\infty$ is outside $\Delta$.}
\label{fig:3noncross0}
\end{figure}

\begin{figure}[htb]
\centering
\begin{tikzpicture}
\draw[pattern = north east lines] (0, 0) circle (2);

\filldraw (1.732, -1) circle (3 pt);
\node at (1.386, -0.8) {$n_2$};
\filldraw (0, 2) circle (3 pt);
\node at (0, 1.6) {$n_1$};

\draw (-2.598, -1.5) to[in = 180, out = -60] (0, -3);
\draw (0, -3) to[in = -60, out = 0] (1.732, -1);

\draw (-2.598, -1.5) to[in = -120, out = 120] (-2.598, 1.5);
\draw (-2.598, 1.5) to[in = 120, out = 60] (0, 2);

\filldraw (0, -2.6) circle (3 pt);
\node at (0, -2.3) {$\infty$};

\end{tikzpicture}
\caption{$k = 2$, $X = L_0(n_1, n_2)$. $\diskin{L_0}$ is lined. $\diskin{L_1}$ is the outer region. $\infty$ is outside $\Delta$.}
\label{fig:2noncross1}
\end{figure}

\begin{proof}
Throughout the proof we use $m$ to range over $\{ 1, \dots, k \}$ and $i$ to range over $\{ 0, 1 \}$.
We also use the notation $p \coloneqq 1 + (m \mod k)$ for the modular successor of $m$.
$Z_m$ is defined as the union of two simple paths. The intersection of these paths is
$$
L_0[n_m, n_p] \cap L_1[n_p, n_m] \subseteq L_0[n_m, n_p] \cap ( \{ n_1, \dots, n_k \} \cup X ) = \{ n_m, n_p \}
$$
proving that $Z_m$ is a simple loop.

The loop $Z_m$ is composed of a segment of $L_i$ (which resides in $\diskout{L_i}$ by definition),
and a segment of $L_{1 - i}$ which resides in $\diskout{L_i}$ by assumption. Therefore $Z_m \subset \diskout{L_i}$,
proving that $Z_m$ and $L_i$ are non-crossing.

We prove the second claim through the following steps:
\begin{enumerate}[Step 1:]
\item For all $m$ and $i$, neither $\diskin{Z_m} \cup \diskin{L_i} = \sphere{2}$ nor $\diskin{Z_m} \subseteq \diskin{L_i}$ hold. \\
The relation $\diskin{Z_m} \cup \diskin{L_i} = \sphere{2}$ is not possible since by definition $\diskin{Z_m} \cup \diskin{L_i} \subseteq \Delta \subsetneq \sphere{2}$.
Assume that $\diskin{Z_m} \subseteq \diskin{L_i}$. This implies
$$
\diskin{Z_m} \cap \diskin{L_{1 - i}} \subseteq \diskin{L_i} \cap \diskin{L_{1 - i}} = \{ n_1, \dots, n_k \} \cup X
$$
We know that by definition $\diskin{Z_m} \cap \diskin{L_0}$ contains the open segment $L_0(n_m, n_p)$ which is not in $X$,
so the case $i = 1$ is not possible.

$\diskin{Z_m} \cap \diskin{L_1}$ contains the open segment $L_1(n_p, n_m)$. This implies $L_1[n_p, n_m] \subset \bar{X}$ so it is a segment of $L_0$.
The equality $L_1(n_p, n_m) = L_0(n_m, n_p)$ is not possible, because by definition $L_0(n_m, n_p) \not\subseteq X$.
The equality $L_1(n_p, n_m) = L_0(n_p, n_m)$ is not possible
because the interior of both $\diskin{L_0}$ and $\diskin{L_1}$ is found on the right of the segment as we travel from $n_p$ to $n_m$, 
implying that the intersection of the two disks has an interior, contrary to our assumption.

\item There are $m$ and $i$ such that $\diskin{Z_m} \supseteq \diskin{L_i}$. \\
Assume this is not the case. By Lemma \ref{lem:NCPair4Options}, there are four possible relations between $\diskin{Z_m}$ and $\diskin{L_i}$.
We have already eliminated two of them.
By assumption, the relation $\diskin{L_i} \subseteq \diskin{Z_m}$ does not occur either.
As a result, the last relation, $\diskin{Z_m} \cap \diskin{L_i} = Z_m \cap L_i$ must be true for all possible values of $m$ and $i$.
Let
$$
R = \diskin{L_0} \cup \diskin{L_1} \cup \bigcup\{ \diskin{Z_m} \}
$$

The boundary $\bd(R)$ is a subset of $L_0 \cup L_1$.
We will show that $\bd(R) = \emptyset$. Recall that $m, p$ always indicate consecutive indices in the clockwise circular order on $L_0$.
Let $x \in L_0 \cup L_1 \setminus \{ n_1, \dots, n_k \}$.

Suppose that $x \in L_0(n_m, n_p)$.
If $L_0(n_m, n_p) \subseteq X$ then it is a common boundary segment of $\diskin{L_0}$ and $\diskin{L_1}$.
Since $\diskin{L_0} \cap \diskin{L_1}$ is nowhere dense by assumption, these disks do not share interior points and therefore their interiors are on opposite sides of $L_0(n_m, n_p)$.
It follows that $x$ is not a boundary point of $R$ in that case.
If $L_0(n_m, n_p) \not\subseteq X$ then it is a common boundary segment of $\diskin{L_0}$ and $\diskin{Z_m}$ and by a similar argument $x$ is not a boundary point of $R$.

Suppose that $x \in L_1(n_p, n_m)$.
If $L_0(n_m, n_p) \not\subseteq X$ then $L_1(n_p, n_m)$ is a common boundary segment of $\diskin{L_1}$ and $\diskin{Z_m}$ and by the same argument as before, $x$ is not a boundary point of $R$.
If $L_0(n_m, n_p) \subseteq X$ then it is a segment of $L_1$ as well, and it must be equal to either $L_1(n_p, n_m)$ or $L_1(n_m, n_p)$.
In the former case $x \in L_0(n_m, n_p)$ and we have already shown that $x$ is not a boundary point of $R$.
In the latter case, when $L_0(n_m, n_p)$ is traversed from $n_m$ to $n_p$, the interior of $\diskin{L_0}$ is on the right by definition,
and since $L_0(n_m, n_p) = L_1(n_m, n_p)$ the interior of $\diskin{L_1}$ is on the right as well, contradicting our assumption that these two disks do not share interior points.

It follows that $\bd(R) \subseteq \{ n_1, \dots, n_k \}$ and is therefore discrete. Since $R$ is a closed set, the boundary must be the empty set.
It follows that $R = \sphere{2}$, contradicting the fact that $R \subseteq \Delta$.

\item If $\diskin{Z_m}$ contains $\diskin{L_i}$ then it contains $\diskin{L_{1 - i}}$. \\
Assume $i = 0$ and traverse the loop $Z_m$ starting with the segment $L_0[n_m, n_p]$ in the direction from $n_m$ to $n_p$.
We encounter the interior of $\diskin{L_0}$ on the right since we are traversing the segment in the clockwise $L_0$-direction by definition.
Since $\diskin{Z_m}$ contains $\diskin{L_0}$, we must encounter the interior of $\diskin{Z_m}$ on the right as well.
In other words, we are traversing $Z_m$ in its own clockwise direction.
As we continue to traverse $Z_m$ in the same direction, we traverse $L_1[n_p, n_m]$ from $n_p$ to $n_m$.
The interior of $Z_m$ is still on the right, and so is the interior of $L_1$,
since we are traversing $L_1[n_p, n_m]$ in the $L_1$-clockwise order by definition.
As a result $\diskin{Z_m}$ and $\diskin{L_1}$ share an interior.
As we have seen, this implies that $\diskin{Z_m} \supseteq \Delta_{L_1}$.
A similar argument works when $i = 1$, we just need to start traversing $Z_m$ along $L_1[n_p, n_m]$ from $n_p$ to $n_m$.
This concludes the proof.
\end{enumerate}
\end{proof}
%%%%% end stronger lemma 2 %%%%%

\subsubsection{Societies and their renditions}
Let $G$ be a graph and $C \subseteq V(G)$ a set of vertices of $G$ which we endow with a circular order. The pair $(G, C)$ is called a {\em society}.
Societies arise naturally when we try to draw graphs on two dimensional surfaces with a connected boundary (i.e. surfaces from which the interior of a closed disk has been removed),
because the boundary induces a natural circular order on the vertices that are drawn along it.

Let $(G, C)$ be a society and $\Delta \subset \mathbb{S}^2$ be a closed oriented disk.
A $C$-rendition of $G$ in $\Delta$ is a triple $(\Gamma, \sigma, \pi)$ such that
\begin{itemize}
\item $\Gamma$ is a bounded painting in $\mathbb{S}^2$ with $\bar{\star}(\Gamma) = \overline{\mathbb{S}^2 \setminus \Delta}$.
As we saw, the orientation of $\Delta$ induces a circular order on each set $\tilde{c}$ in $C(\Gamma)$.
\item $\pi : \mathcal{N} \hookrightarrow V(G)$ is a 1:1 association of nodes to vertices of $G$, and $\pi(\tilde{\star}) = C$ as cyclically ordered sets.
\item $\sigma$ is a function assigning a subgraph $\sigma(c) \subseteq G$ , called the {\em flap} of $c$, to each cell $c \in \mathcal{C}$,
such that
  \begin{itemize}
  \item $G = \bigcup_{c \in C} \sigma(c) \cup \pi(\mathcal{N})$
  \item for any node $c$, $\sigma(c) \cap \pi(\mathcal{N}) = \pi(\tilde{c})$, and $\sigma(\star) = \pi(\tilde{\star}) = C$.
  \item if $c_1 \ne c_2$ then $\sigma(c_1) \cap \sigma(c_2) = \pi(\tilde{c_1} \cap \tilde{c_2})$.
  \end{itemize}
\end{itemize}

A society $(G, C)$ is called {\em rural} if there is a $C$-rendition of $G$ in a disk.
The notion of a rural society is originally due to Robertson and Seymour in \cite{RSIX}, but the definition here (taken from Section 2.1 of \cite{QuicklyExcluding}) is clearer.

\subsection{Graph terminology vs. topological terminology}
Like tree decompositions, the power of renditions comes from the interplay they engender between the structure of the underlying graph and the geometry of the rendition.
To minimize confusion, I try to use separate terminologies for graph terms and the rendition-related topological terms.

\begin{itemize}
\item A subset of a disk homeomorphic to the unit circle $\mathbb{S}^1$ will be referred to as a {\em loop} (the usual term is {\em simple loop}, but all the loops here will be simple).
A cycle in a graph will be referred to as a {\em cycle}.
\item Loops in an oriented disk will be {\em oriented} and have {\em (clockwise) orientations}. Directed cycles in a graph will be {\em directed} and have {\em (clockwise) directions}.
\item Graphs have {\em vertices} and renditions have {\em nodes}.
When there is no risk of confusion, I will use the term {\em node} to refer to a graph vertex that is associated to a node through the $\pi$-mapping,
and often ignore $\pi$ completely and just assume that $\mathcal{N} \subseteq V(G)$ and $\pi$ is the identity.
\item If $H$ is a subgraph of $G$, the nodes of $H$ are denoted $N(H)$. So
$$
N(H) = V(H) \cap \pi(\mathcal{N}) = V(H) \cap \mathcal{N}
$$
If $X$ is a subset of $\Delta$, the nodes of $X$ are denoted $N(X)$. So $N(X) = X \cap \mathcal{N}$.
\item A subset of a disk $\Delta$ homeomorphic to the closed unit interval is usually referred to as a path in $\Delta$ , but here will be referred to as a {\em segment}. The term {\em path} will be reserved to paths in a graph.
The interior of a segment in $\Delta$ will be referred to as an {\em open} segment.
\item By abuse of notation, we will say that an edge (or vertex) of $G$ resides in a cell if it belongs to the flap of that cell.
\end{itemize}

\subsection{Tracks and proper cycles}

Throughout this subsection, $(G, C)$ is a rural society and $\rho = (\Gamma, \sigma, \pi)$ is a $C$-rendition of $G$ in an oriented disk $\Delta \subset \sphere{2}$.
Throughout, $\mathcal{N}$, $\mathcal{C}$ and $\tau$ refer to $\mathcal{N}(\Gamma)$, $\mathcal{C}(\Gamma)$ and $\tau(\Gamma)$. 
Most of the definitions here are taken from \cite{NewProof} and are rephrased here for completeness.
I tweaked the notion of {\em track} slightly to make it easier to work with.

\begin{defn}
  A path $P$ in $G$ is called {\em grounded} (relative to $\rho$) if both ends of $P$ are nodes of $\rho$.
  A cycle $D \subseteq G$ is {\em grounded} if there are two edges of $D$ that reside in different cells of $\rho$.
  A directed grounded path $Q$ is called {\em atomic} if $\vert E(Q) \vert \ge 1$ and none of its internal vertices are nodes of $\rho$.
  An atomic path $Q$ is called {\em trivial} if it contains no internal vertices, i.e. $\vert E(Q) \vert = 1$.
  \end{defn} 
  
  \begin{lem}
  An atomic path must reside entirely inside $\sigma(c)$ for some internal cell $c$.
  \end{lem}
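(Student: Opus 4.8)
Let $Q$ be an atomic path with ends $u, v \in \mathcal{N}$. The plan is to show that all edges of $Q$ lie in a single flap, then conclude that all vertices do too, and finally that the cell in question is internal.

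First I would observe that every edge $e$ of $G$ belongs to the flap of a \emph{unique} cell. Existence follows from $G = \bigcup_{c \in \mathcal{C}} \sigma(c) \cup \pi(\mathcal{N})$, since $e$, being an edge, cannot be contributed by the vertex set $\pi(\mathcal{N})$. Uniqueness follows from the rendition axiom $\sigma(c_1) \cap \sigma(c_2) = \pi(\tilde{c_1} \cap \tilde{c_2})$ for $c_1 \ne c_2$: the right-hand side is an edgeless graph, so no edge can lie in two distinct flaps.

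Next I would propagate a single flap along $Q$. Let $e_1 = xy$ and $e_2 = yz$ be consecutive edges of $Q$ sharing the vertex $y$; since $Q$ is atomic, $y$ is an internal vertex of $Q$ and hence not a node. Write $e_1 \in \sigma(c_1)$ and $e_2 \in \sigma(c_2)$. If $c_1 \ne c_2$, then $y \in V(\sigma(c_1)) \cap V(\sigma(c_2)) \subseteq \pi(\tilde{c_1}) \cap \pi(\tilde{c_2}) \subseteq \pi(\mathcal{N})$, contradicting that $y$ is not a node. So $c_1 = c_2$. Since $|E(Q)| \ge 1$ and $Q$ is a path, walking along $Q$ and applying this at each internal vertex shows that there is a single cell $c$ with $E(Q) \subseteq E(\sigma(c))$. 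Every vertex of $Q$ — internal vertices and the ends $u$ and $v$ — is an endpoint of some edge of $Q$ (here we use $|E(Q)| \ge 1$ for the ends as well), hence lies in $V(\sigma(c))$. Therefore $Q \subseteq \sigma(c)$.

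Finally, $c$ is internal: $\sigma(\star) = \pi(\tilde{\star}) = C$ is an edgeless graph, so it contains no edge of $Q$, and since $E(Q) \subseteq E(\sigma(c))$ is nonempty we must have $c \ne \star$. I do not anticipate a genuine obstacle here; the only point needing care is the edge-uniqueness observation, which is exactly what makes the "same flap along consecutive edges" step go through cleanly.
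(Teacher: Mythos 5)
Your proof is correct and takes essentially the same route as the paper: both argue that the internal vertices of $Q$ are non-nodes, hence consecutive edges of $Q$ must lie in the same flap, and conclude by noting that $\sigma(\star)$ is edgeless. Your version is a bit more explicit (spelling out edge-uniqueness and handling the trivial case $|E(Q)|=1$ cleanly), but it is the same argument.
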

  
  \begin{proof}
  Let $Q$ be an atomic path and let $v$ be an internal vertex of $Q$. Then $v$ is not a node and $v \in V(\sigma(c))$ for a unique cell $c$.
  There are two edges of $Q$ that end in $v$ and both must be in $\sigma(c)$.
  Therefore every adjacent pair of $Q$-edges share a cell and therefore all the edges of $Q$ share a cell.
  Since $Q$ has at least one edge, the shared cell must be internal.
  \end{proof}
  
  \begin{defn}
  Let $Q$ be an atomic path. We call the shared cell of the edges of $Q$ the {\em home} of $Q$ or $h(Q)$.
  % RESTORE IF NECESSARY When $Q$ is directed, going from node $n_1$ to node $n_2$, we may write $n_1Qn_2$ instead of just $Q$ to emphasize this fact.
  \end{defn}
  
  \begin{defn}
  It is not hard to see that a directed grounded path $P$ with $k$ nodes ($k \ge 1$) can be written uniquely as a concatenation of $k-1$ atomic paths
  (in the case $k = 1$ this empty concatenation should be interpreted as the node that consists the whole path.)
  These atomic paths are called the {\em factors} of $P$. If $m, n$ are consecutive nodes of $P$ in the given direction then the factor of $P$ connecting $m$ to $n$ is the directed subpath $P[m, n]$.
  
  A directed grounded cycle $D$ with $k$ nodes ($k \ge 2$) can be written uniquely (up to rotation) as a circular concatenation of $k$ atomic paths.
  If $m, n$ are consecutive nodes of $D$ in the given direction then the factor of $D$ connecting $m$ to $n$ is the subpath  $D[m, n]$ that proceeds from $m$ to $n$ in the given direction.
  
  We call these representations the {\em atomic decomposition} of the oriented path (or cycle). The atomic components of the decompositions are called the {\em factors} of the path (or cycle).
  \end{defn}
  
  The homes of consecutive atomic paths in an atomic decomposition do not have to be distinct.
  A cell can appear as a home either once or twice in an atomic decomposition, and any pair of factors with the same home must be (cyclically) adjacent.
  It is clear from the definition of groundedness that if a directed grounded cycle $D$ has an atomic decomposition of length two, $D = Q_1 Q_2$, then $h(Q_1) \ne h(Q_2)$.

  \begin{defn}
  Given an atomic path $Q$ with ends $n_1 \rightarrow n_2$, define $s_Q = \tau(h(Q), n_1, n_2)$.
  Then $s_Q$ is an $\mathcal{N}$-free open segment of the boundary of $h(Q)$ that connects the nodes at the two ends of $Q$.
  
  For a simple grounded path $P$, choose a direction for $P$ and let $n_0, \dots n_{k - 1}$ be the nodes of $P$, written in order, where $k \ge 1$. Then the {\em track} of $P$ is the set
  $$
  \tr(P) = ( \bigcup\limits_{i = 1}^{k-1} s_{P[n_{i-1}, n_i]} ) \cup \{ n_0, \dots, n_{k - 1} \}
  $$
  For a simple grounded cycle $D$, choose a direction of $D$ and let $n_0, \dots n_{k - 1}$ be the nodes of $D$, written in circular order, where $k \ge 2$. Then the {\em track} of $D$ is the set
  $$
  \tr(P) = ( \bigcup\limits_{i = 1}^k s_{D[n_{i-1}, n_{(i \mod k)}]} ) \cup \{ n_0, \dots, n_{k - 1} \}
  $$
  Because $\tau$ is unoriented, the definition of track is independent of the choice of direction of the path (or cycle).
  The track of a cycle is also independent of the choice of the starting node $n_0$.
  In the case of a grounded path $P$, the track of $P$ is a segment in $\Delta$ connecting the ends of $P$. In the case of a grounded cycle $D$, the track of $D$ is a loop in $\Delta$.
  In either case the track passes through all the nodes of $P$ (or $D$),  and no other nodes.
  
  While the track does not depend on the chosen direction of the path or cycle, each direction of the path or cycle induces an orientation on the track, and vice versa.
  The track of a grounded cycle $D$ has a clockwise orientation induced by the orientation of $\Delta$. This orientation, in turn, induces a direction on $D$ itself. We call that direction the {\em clockwise} direction of $D$.

  For a cycle $D$, $\tr(D)$ is a loop in $\Delta$. We write $\diskin{D} \coloneqq \diskin{\tr(D)}$ and $\diskout{D} \coloneqq \diskout{\tr(D)}$. 
  
\end{defn}

\begin{lem}
Let $P$ and $R$ be two grounded paths (or cycles) in $G$. Then $P$ and $R$ intersect if and only if their tracks intersect.
If the intersection $P \cap R$ is a path, then the intersection of their tracks is a segment or a loop.
\end{lem}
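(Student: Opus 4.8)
The plan rests on three facts already established: the track of a grounded path or cycle meets $\mathcal{N}$ in exactly the nodes of that path or cycle; each interior piece $s_Q$ of a track is an $\mathcal{N}$-free open arc lying on $\bd(h(Q))$ for a single internal cell, and the disks of distinct cells meet only inside $\mathcal{N}$; and every internal cell $c$ has $|\tilde{c}| \le 3$. For the biconditional I would argue vertex by vertex, taking an endpoint when $P$ and $R$ share an edge. A common vertex that is a node lies on both tracks by the first fact. A common vertex $v$ that is not a node lies in a unique flap $\sigma(c)$ with $c$ internal, and is an internal vertex both of the factor $Q$ of $P$ containing it and of the factor $Q'$ of $R$ containing it, so $h(Q) = h(Q') = c$; the end-node pairs of $Q$ and of $Q'$ are two-element subsets of $\tilde{c}$, hence not disjoint, and any shared end-node lies on both tracks. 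Conversely, a common point $x$ of the two tracks that is a node is a common vertex; if $x$ is not a node it lies on an interior piece $s_Q$ of $\tr(P)$ and an interior piece $s_{Q'}$ of $\tr(R)$, and since these are $\mathcal{N}$-free and meet at $x$ they lie on the boundary of the same internal cell $c$ and must be equal (two $\mathcal{N}$-free open sub-arcs of $\bd(c)$ of the form $\tau(c,\cdot,\cdot)$ are equal or disjoint). Then $Q$ and $Q'$ have the same pair of end-nodes, and those are common vertices of $P$ and $R$.

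For the second statement write $S \coloneqq P \cap R$, a path, and list its nodes $N(S)$ in the order in which they occur along $S$. First, $S$ has at least one node: a non-node vertex of $S$ would, by the argument above, force the factors of $P$ and of $R$ containing it to share an end-node, which then lies in $V(P) \cap V(R) = V(S)$. The key structural point is that \emph{any two consecutive members $m, m'$ of $N(S)$ are joined by a subpath $S[m,m']$ that is at once a factor of $P$ and a factor of $R$}: it is a contiguous, interiorly node-free subpath of each of $P$ and $R$, and its ends are nodes. This common factor has a single home $c$, an internal cell, and its track segment is $\tau(c, m, m')$ regardless of whether it is read inside $P$ or inside $R$; so the closed arc $\overline{s_{S[m,m']}}$, a segment with ends $m$ and $m'$, lies in $\tr(P) \cap \tr(R)$.

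Next I would show $\tr(P) \cap \tr(R)$ is connected. Every point of it is a node of $S$ or lies on a common interior piece $s_Q = s_{Q'}$ whose two end-nodes lie in $N(P) \cap N(R) \subseteq V(S)$, so every point can be joined, within $\tr(P) \cap \tr(R)$, to a node of $S$. It then suffices to join two nodes $m$ and $m''$ of $S$: walk along $S$ from $m$ to $m''$, let $m = m_0, m_1, \dots, m_t = m''$ be the nodes of $S$ met in order, and chain the segments $\overline{s_{S[m_{j-1}, m_j]}}$, each of which lies in $\tr(P) \cap \tr(R)$ by the previous paragraph. Hence $\tr(P) \cap \tr(R)$ is connected; it is also closed (tracks are compact) and contained in $\tr(P)$, which is a segment or a loop. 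A nonempty closed connected subset of a space homeomorphic to $[0,1]$ or to $\sphere{1}$ is a single point, a segment, or (only when $\tr(P)$ is a loop) the whole loop. Since $S \ne \emptyset$, the biconditional gives $\tr(P) \cap \tr(R) \ne \emptyset$, and if $|N(S)| \ge 2$ it contains the nondegenerate segment $\overline{s_{S[m,m']}}$ for some consecutive pair, so it is not a single point and the claim follows.

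The step I expect to be delicate is handling cycles: when $P$ or $R$ is a cycle, ``the subpath of $P$ between two of its vertices'' is ambiguous, so one should not identify $\tr(P) \cap \tr(R)$ with the track of a single subpath of $S$; the connectedness argument is written to avoid this by chaining common segments along $S$, not along $P$ (indeed, if $P$ and $R$ are two cycles agreeing outside one flap, $\tr(P) \cap \tr(R)$ really can be the whole loop). I would also flag that the conclusion is slightly too strong: the one case left open above is $|N(S)| = 1$, the case $|N(S)| = 0$ being impossible, and there the two tracks meet in exactly that one node (this occurs, for instance, when $P \cap R$ is a single edge with a non-node endpoint), so strictly the intersection is a segment, a loop, or a single point, the last happening precisely when $P \cap R$ has exactly one node.
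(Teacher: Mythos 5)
Your argument follows the paper's own route in the forward direction — a common non-node vertex $v$ forces its two containing factors to share a home cell $c$, and since the factors' end-node pairs are two-element subsets of $\tilde{c}$ with $\vert\tilde{c}\vert \le 3$, they must overlap at a node on both tracks — and the connectedness chaining is an explicit elaboration of what the paper dismisses as ``not hard to see.'' You do, however, close two holes the paper leaves. The paper's converse only treats the case where the tracks share a node; your observation that a shared non-node point must lie on coinciding $\mathcal{N}$-free arcs $s_Q = s_{Q'}$ of a single cell boundary (so the shared end-nodes are common vertices) is the missing step. More importantly, you are right that the lemma as stated is slightly too strong: if $P \cap R$ is a path with exactly one node $m$ (e.g.\ a single edge whose non-node end $v$ sits in a degree-3 cell $c$, with the factors of $P$ and $R$ through $v$ leaving $c$ at two different third nodes, so $s_Q$ and $s_{Q'}$ are disjoint arcs of $\bd(c)$), then $\tr(P) \cap \tr(R) = \{m\}$ is neither a segment nor a loop under the paper's definitions. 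The paper's proof silently runs into the same degenerate case — when the longest grounded subpath $\tilde X$ is a single node, ``the track of $\tilde X$'' is a point — without remarking on it. Your characterization (segment, loop, or a point, the last occurring exactly when $\vert N(P \cap R)\vert = 1$) is the correct statement.
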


\begin{proof}
Choose directions for $P$ and $R$.
Suppose $P$ and $R$ intersect and let $v \in V(P) \cap V(R)$. If $v$ is a node then it is on the intersection of the tracks.
Otherwise $v$ belongs to a unique cell $c$. Let $Q_P$ and $Q_R$ be the unique atomic factors of $P$ and $R$ respectively with $v \in V(Q_P) \cap V(Q_R)$.
Then $h(Q_P) = h(Q_R) = c$ and therefore $P$ and $R$ have at least two nodes each in the
boundary of $c$. Since $\vert \tilde{c} \vert \le 3$ it follows that at least one of these nodes is common to both and belongs to the intersection of the tracks.
Conversely assume that the tracks of $P$ and $R$ share a node. This node is by definition a vertex of both, so the paths intersect.

Suppose $X = P \cap R$ is a path. By the preceding argument, $X$ contains at least one node.
If $\tilde{X}$ is the longest grounded subpath of $X$, it is not hard to see that the intersection of the tracks of $P$ and $R$ is usually the track of $\tilde{X}$, except in the case where $P$ and $R$ are both cycles and share the same track.
\end{proof}

Finally we need a few more definitions.
When consecutive factors of an directed cycle $D$ share the same home $c$, their tracks also share $c$. In such a case $\tilde{c}$ must have three nodes, and all of them are nodes of $D$,
and there are two consecutive segments of $\tr(D)$ along the boundary of $c$.
There are other complex ways for a cell of degree $3$ to interact with $\tr(D)$.
If $\vert \tilde{c} \vert = 3$ and $\tilde{c} \subseteq N(D)$ then the track of $D$ may have two, one or zero segments along the boundary of $c$.
We are interested in cycles $D$ where the interaction of $\tr(D)$ with the homes of its factors is particularly simple. These {\em proper} cycles are of particular importance for the arguments presented in this paper.

  \begin{defn}
  Let $c$ be an internal cell of $\rho$. The {\em degree} of $c$ is the number of nodes on the boundary of $c$.
  If $D$ is a grounded cycle, then $c$ is {\em internal relative to $D$} if $c \subseteq \diskin{D}$, and otherwise it is {\em external relative to $D$}.
  If $c$ is external relative to $D$ and at least on of the factors of $D$ has $c$ as its home, then we say that $c$ is a {\em border} cell of $D$.
  \end{defn}
  
  \begin{defn}
  A cycle $D$ is called {\em proper} if each border cell $c$ of $D$ has degree $3$, and exactly two of the nodes of $c$ are nodes of $D$. It follows immediately that the third node is in the interior of $\Delta_D^{\text{out}}$.
  \end{defn}

\section{Fixing a wall}

\subsection{Fixing the definition of flatness}
In \cite{NewProof}, the definition of the term {\em flat wall} is too restrictive. It ignores the possibility that the induced graph $G[A \cap B]$ of the torso $A \cap B$ of the separation $(A, B)$ may possess crossing edges that can prevent
the typical wall from being flat. This is exactly how the counterexample to the version of the Flat Wall Theorem in \cite{NewProof} is constructed. See Appendix \ref{CounterExample} for a description of the counterexample.

The following weaker definition seems to be sufficient for the purpose of \cite{NewProof, QuicklyExcluding}. We start with a definition that will help us reason about pegs.

\begin{defn}
  Let $W_e$ be a an elementary wall. Let $D$ be the boundary of $W_e$. A subpath $P$ of $D$ is called a {\em peg interval of $W_e$} if $V(P) > 2$; both ends of $P$ are degree $3$ vertices of $W_e$;
  and all of the internal vertices of $P$ are pegs, i.e. degree $2$ vertices of $W_e$.
  If $W$ is a wall with boundary $D$ that is obtained by an edge subdivision of an elementary wall $W_e$, then a subpath $P$ of $W$ is a {\em peg interval of $W$} if it is a subdivision of a peg interval of $W_e$.
  Notice that this definition does not depend on the choice of $W_e$.
\end{defn}

\begin{defn}\label{def:flatwall}
  Let $G$ be a graph and $W \subset G$ a wall with boundary $D$. We say that $W$ is {\em flat} in $G$ if there is a separation $(A, B)$ of $G$ and a vertex set $\Omega \subseteq A \cap B$ such that
  \begin{enumerate}
  \item $V(W) \subset V(B)$
  \item $A \cap B \subset V(D)$
  \item $\Omega$ intersects the interior of each peg interval of $W$.
  \item Endow $\Omega$ with a circular order induced from $D$. Then $(G[B], \Omega)$ is a rural society, as defined in Section 2.1 of \cite{QuicklyExcluding}.
  \end{enumerate}
\end{defn}

This definition is weaker than the overly strong definition in \cite{NewProof} in two respects. It does not require $\Omega = A \cap B$,
and it does not require that there be a choice of an elementary wall for $W$ such that every peg of it is in $\Omega$. The only requirement is for one peg in every peg interval to be present in $\Omega$. 
An elementary wall has peg intervals that contain two or even three distinct pegs.
Notice however that the {\em corners} of $W_e$ reside in distinct peg intervals, so $W_e$ can be chosen such that every corner of $W$ is in $\Omega$.

\subsection{Fixing Lemma 5.1 in \cite{NewProof}}

Lemma 5.1 is a technical lemma, used in \cite{NewProof} to prove the Flat Wall Theorem 5.2. The original lemma needs to be restated due to the problematic flat wall definition, and in addition it needs to be proved more carefully because of subtleties that went unnoticed in the original proof.

We start with defining tight renditions and proving their basic properties.

\begin{defn}
Let $\rho$ be a rendition of a society in a disk.
The {\em degree} of $\rho$ is the sum of the degrees of all the cells of $\rho$.
\end{defn}

\begin{defn}
Let $\rho$ be a rendition of a society in a disk, and let $c$ be a cell of $\rho$.
We call $c$ {\em empty} if the flap $\sigma(c)$ is an edgeless graph. %RESTORE IF NECESSARY  with fewer than 3 vertices.
Clearly the number of non-empty cells in $\rho$ is bounded by $\vert E(G) \vert$.
\end{defn}

%RESTORE IF NECESSARY \begin{lem}
%Let $(G, C)$ be a society and let $\rho$ be a rendition of that society in a disk. Then the number of non-empty cells in $\rho$ is bounded by $\vert E(G) \vert + 2 {{\vert V(G) \vert} \choose 3}$. 
%\end{lem}

%\begin{proof}
%Let $\alpha$ be the number of cells of degree at least $3$ in $\rho$, and $\beta$ the number of cells whose flap contains an edge. Then the number of non-empty cells in $\rho$ is at most $\alpha + \beta$.
%Clearly, $\beta \le \vert E(G) \vert$.
%Suppose that $\alpha > 2 {{\vert V(G) \vert} \choose 3}$. Then there are three distinct nodes $m, n, p$ that reside on the boundaries of at least $3$ different cells,
%and we can find two of these cells, say $c$ and $c'$, where we encounter $m$, $n$ and $p$ in the same circular order as we traverse the boundaries of $c$ and $c'$ respectively in a clockwise direction.
%But that would force the interior of one of these cells to contain the other, instead of being disjoint as expected.
%\end{proof}
%
%In light of this result, we can define the following. The term {\em tight} is due to \cite{MoreAccurate}, but the definition here is simpler and a little different.
Since we have a global bound on the number of non-empty cells of a $C$-rendition of $G$ in a disk, we can define the following.
\begin{defn}
Let $(G, C)$ be a rural society. A {\em maximal $C$-rendition} of $G$ in a disk is a $C$-rendition of $G$ in a disk with the maximal possible number of non-empty cells.
A maximal rendition is {\em tight} if it has a minimum degree among all maximal renditions.
\end{defn}

\begin{lem}\label{TightRenditionProps}
Let $(G, C)$ be a rural society, and let $\rho = (\Gamma, \sigma, \pi)$ be a tight $C$-rendition of $G$ in a disk. Then $\rho$ has the following properties:
\begin{enumerate}
\item\label{PropHomeAlone} If $Q$ is a trivial atomic path, it is home alone, i.e. $\sigma(h(Q)) = Q$. In particular the cell $h(Q)$ has degree two.
\item\label{PropThreeConnected} Let $c$ be a cell of $\rho$. If $c$ is non-empty and the nodes of $c$ are not  isolated in $G$, then the nodes of $c$ belong to a single connected component of $\sigma(c)$.
\item\label{PropTwoConnected} Let $c$ be a cell of $\rho$. If $c$ has degree $3$ and the nodes of $c$ belong to a single connected component of $\sigma(c)$ then it is not possible for one of the nodes of $c$ to separate the other two in $\sigma(c)$.
\end{enumerate}
\end{lem}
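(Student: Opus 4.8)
The plan is to prove all three parts by a single extremal argument. Suppose the asserted property fails at some cell $c$. I then build a $C$-rendition $\rho'$ of $G$ in a disk by a modification that is purely local to $c$: the external cell, the vertex set $C$, and the flaps of all other cells are left untouched, so $\rho'$ is again a $C$-rendition of $G$ in the same disk $\Delta$. By tracking two quantities, the number of non-empty cells and the degree, I arrange that $\rho'$ either has strictly more non-empty cells than $\rho$, contradicting the maximality of $\rho$, or has the same number of non-empty cells but strictly smaller degree, contradicting the tightness of $\rho$. I assume throughout that $G$ has no isolated vertices outside $C$ (this is harmless for the applications and rules out a trivial degenerate case in part~\ref{PropHomeAlone}, where an isolated vertex parked in the flap of a cell could not be moved away at no cost).

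For part~\ref{PropHomeAlone}, write $Q = n_1 n_2$ and $c = h(Q)$ and suppose $\sigma(c) \neq Q$. The segment $s_Q = \tau(c, n_1, n_2)$ is an $\mathcal{N}$-free arc of $\bd(c)$ joining $n_1$ to $n_2$, so I can carve a thin sliver $c'$ out of $c$ alongside $s_Q$, setting $\tilde{c'} = \{n_1, n_2\}$, $\sigma(c') = Q$, and deleting the edge $n_1 n_2$ from $\sigma(c)$; then $c$ retains its degree and $c'$ has degree two. If $\sigma(c)$ still contains an edge, $\rho'$ has one more non-empty cell than $\rho$, contradicting maximality. Otherwise $\sigma(c)$ is the edge $Q$ together with some isolated vertices; by our assumption any such vertex $w$ must be a node of $c$ (so $c$ had degree three) having an edge in another flap, and deleting $w$ from $\tilde{c}$ — it still lies on $\bd(U)$, via that other cell — lowers the degree while preserving the set of non-empty cells, contradicting tightness. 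Hence $\sigma(c) = Q$, and in particular $c$ has degree two.

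For parts~\ref{PropThreeConnected} and~\ref{PropTwoConnected} I split $c$ according to the connectivity structure of $\sigma(c)$. In part~\ref{PropThreeConnected} I may first assume no node of $c$ is isolated in $\sigma(c)$: by hypothesis it is not isolated in $G$, so it has an edge in another flap and can be deleted from $\tilde{c}$ as above, contradicting tightness. Then every component of $\sigma(c)$ that contains a node of $c$ contains an edge, so if the nodes of $c$ lay in $r \ge 2$ distinct components I shrink $c$ into $r$ pairwise disjoint subdisks, the $i$-th one meeting $\bd(c)$ exactly in the nodes of the $i$-th such component and carrying that component (nodeless components distributed arbitrarily); each subdisk is then non-empty, so $\rho'$ has $r - 1$ more non-empty cells than $\rho$, contradicting maximality. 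In part~\ref{PropTwoConnected} the hypothesis that the three distinct nodes $n_1, n_2, n_3$ of $c$ lie in one component of $\sigma(c)$ already forbids any node being isolated in $\sigma(c)$. Assume $n_2$ separates $n_1$ from $n_3$ in $\sigma(c)$, let $K_1$ consist of $n_2$ together with the component of $n_1$ in $\sigma(c) - n_2$ and their joining edges, and let $K_2 = \sigma(c) - (K_1 - n_2)$. Then $K_1$ is connected, $K_1 \cap K_2 = \{n_2\}$, $n_1 \in V(K_1)$, $n_3 \in V(K_2)$, and each of $K_1, K_2$ contains an edge (a path from $n_2$ to $n_1$, respectively from $n_2$ to $n_3$). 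Splitting $c$ into a subdisk meeting $\bd(c)$ in $\{n_1, n_2\}$ carrying $K_1$ and a subdisk meeting $\bd(c)$ in $\{n_2, n_3\}$ carrying $K_2$ yields two non-empty cells where there was one, again contradicting maximality.

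I expect the main obstacle to be the topological bookkeeping that each modified picture is a genuine bounded painting: one must check that the sliver in part~\ref{PropHomeAlone} and the subdisks in parts~\ref{PropThreeConnected} and~\ref{PropTwoConnected} can be realized as closed disks whose closures meet the other cells only in the prescribed nodes, that the new cells are exactly the connected components of the new region $U \setminus \mathcal{N}$, and that deleting a node from $\tilde{c}$ keeps that node on $\bd(U)$ — for the last point one uses that $c$ is internal, so immediately inside $\bd(c)$ near the deleted node there is either another cell still carrying it or a region outside $U$. Granting these geometric facts, the rendition axioms $G = \bigcup_c \sigma(c) \cup \pi(\mathcal{N})$, $\sigma(c) \cap \pi(\mathcal{N}) = \pi(\tilde{c})$, and $\sigma(c_1) \cap \sigma(c_2) = \pi(\tilde{c_1} \cap \tilde{c_2})$ follow directly from how the flaps were reassigned, and the counting of non-empty cells and of degree is as described above.
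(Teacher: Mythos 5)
Your argument is essentially the paper's: for each clause you locally re-carve the offending cell and contradict either maximality of the non-empty-cell count or, failing that, tightness of the degree; the splittings in parts~2 and~3 match the paper's exactly. The one substantive difference is in part~1, where you noticed something the paper's own proof glosses over: after carving off the sliver $c'$ with $\sigma(c')=Q$ and deleting $e$ from $\sigma(c)$, the remaining flap $\sigma(c)$ may be left edgeless (namely when $\sigma(c)$ consisted of $Q$ together with isolated vertices), in which case the non-empty count does not increase and maximality gives no contradiction. You handle this by adding the hypothesis that $G$ has no isolated vertices outside $C$, and some such hypothesis is in fact needed: an isolated vertex $w\notin\mathcal{N}$ parked as a non-node in $\sigma(h(Q))$ can be shuffled between flaps without changing the non-empty count or the degree, so a tight rendition need not be home alone, and the lemma as literally stated fails without a normalization of this kind. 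One small slip remains in your cleanup step: you justify removing the extra isolated node $w$ from $\tilde{c}$ on the grounds that $w$ ``has an edge in another flap,'' but your hypothesis still permits $w\in C$ to be isolated in $G$, so $w$ might have no edge anywhere; the removal is nonetheless legitimate because $w\in\tilde{\star}$ already keeps $w$ on $\bd(\bar{\star})\subseteq\bd(U)$, so you should cite that case separately.
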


\begin{proof}
To prove claim \ref{PropHomeAlone}, assume that $Q \subsetneq \sigma(h(Q))$.
Create a new $C$-rendition $\rho' = (\Gamma', \sigma', \pi')$ of $G$, starting with $\rho' \coloneqq \rho$ and then modifying it as follows.
Create a new empty cell $c$ right next to $h(Q)$, along the track of of $Q$ and with the nodes of $Q$ serving as the only nodes of $c$.
Let $e$ be the lone edge of $Q$. Redefine $\sigma'(h(Q)) = \sigma(Q) \setminus e$ and set $\sigma'(c) = Q$.
Neither $h(Q)$ nor $c$ are empty in the modified rendition, so it has one more non-empty cell than $\rho$, contradicting its maximality.

To prove claim \ref{PropThreeConnected}, first suppose that one of the nodes of $c$, say $m$, is isolated in $\sigma(c)$.
Since $m$ is not isolated in $G$, it must be a node of some additional cell $c'$.  We can extract $m$ from $c$ by trimming the boundary of $c$ around $m$ and re-defining $\sigma'(c) = \sigma(c) \setminus m$.
This change does not make $c$ empty because $\sigma(c)$ has an edge, and $m$ is not an end of any edge of $\sigma(c)$, so the new rendition is still maximal.
We did, however, reduce the degree of the rendition, violating the tightness of $\rho$. So this case is not possible, and so the nodes of $c$ are not isolated in $\sigma(c)$.

Suppose $\sigma(c) = M \sqcup P$ where $M$ and $P$ are disjoint and $c$ has nodes $m \in V(M)$, $p \in V(P)$.
Since $m$ and $p$ are not isolated in $\sigma(c)$, both $M$ and $P$ have edges, and we can replace the cell $c$ with a pair of disjoint cells $c_M$ and $c_P$, with $\tilde{c}_M = N(M)$,
$\sigma(c_M) = M$, $\tilde{c}_P = N(P)$, and $\sigma(c_P) = P$.
The resulting rendition violates the maximality of $\rho$, since neither $c_M$ nor $c_P$ is empty.

To prove claim \ref{PropTwoConnected}, let $\tilde{c} = \{ m, n, p \}$, and assume that $n$ separates nodes $m$ and $p$ in $\sigma(c)$.
Then there is a separation $(M, P)$ of $\sigma(c)$ with $m \in V(M)$, $p \in V(P)$ and  $M \cap P = \{ n \}$.
Modify $\rho$ by replacing the cell $c$ by two cells $c_m$ and $c_p$ of degree $2$ each, such that $\tilde{c}_m = \{ m, n \}$ and $\tilde{c}_p = \{ n, p \}$.
Define $\sigma(c_m) = M$ and $\sigma(c_p) = P$. Since $m$ and $p$ are both connected to $n$ in $\sigma(c)$, both $c_m$ and $c_p$ are non-empty and the maximality of $\rho$ is violated.
\end{proof}

Finally, here is the replacement lemma for Lemma 5.1 of \cite{NewProof}.
\begin{lem51}
  Let $(G, C)$ be a rural society with $\vert C \vert \ge 4$.
  Let $W \subset G$ be a subgraph and $D \subset W$ be a directed cycle. Assume the following:
  \begin{enumerate}
  \item $W' = W \setminus V(D)$ is connected
  \item There are four simple paths $P_1, \dots, P_4$ from $C$ to $W'$ that are vertex-disjoint (with the possible exception of their $W'$ ends) and
  for each $1 \le i \le 4$, the intersection $P_i \cap D$ is a non-empty path. Let $x_i, y_i \in V(G)$ be the $C$-end and $W'$-end of $P_i$, respectively.
  \end{enumerate}
  Then $D$ is grounded relative to any $C$-rendition of $G$ in a disk,
  and one can choose a specific $C$-rendition $\rho$ of $G$ in an oriented disk $\Delta$ and a proper $\rho$-grounded cycle $E \subset G$ with the following properties:
  \begin{enumerate}
  \item\label{Dclockwise} the given direction of $D$ agrees with its induced clockwise direction.
  \item\label{VEsubVD} $N(E) \subseteq N(D)$, and the circular orders of $N(E)$ induced by the clockwise directions of $E$ and $D$ agree.
  \item\label{Efactors} For any clockwise consecutive nodes $m \rightarrow n$ of $E$, if $m \rightarrow n$ are also clockwise consecutive in $D$, then $E[m, n] = D[m, n]$.
  \item\label{DeltaEsupset} $\diskin{E} \supseteq \diskin{D}$
  \item Let $(A, B)$ be the following separation of $G$:
    \begin{itemize}
    \item $A = N(\Delta^{\text{out}}_E) \cup ( \bigcup\limits_{c \subseteq \Delta_E^{\text{out}}} \sigma(c) )$
    \item $B = (V(D) \cap V(E)) \cup ( \bigcup\limits_{c \subseteq \diskin{E}} \sigma(c) )$
    \end{itemize}
    Let $P \subset V(D) \setminus \mathcal{N}(\rho)$ be a set with the following properties:
    \begin{itemize}
    \item If $c$ is a border cell of both $D$ and $E$ then $\vert P \cap \sigma(c) \vert \le 1$.
    \item For any other cell $c$, $P \cap \sigma(c) = \emptyset$.
    \end{itemize}
    Then $V(W) \subseteq V(B)$, and if we define $\Omega = N(E) \cup P$, then $\Omega \subseteq A \cap B \subseteq V(D)$ and the society $(G[B], \Omega)$ is rural
    where the circular order on $\Omega$ is induced by the clockwise direction of $D$.
  \end{enumerate}
\end{lem51}
\begin{proof}
We proceed through a series of steps. We will choose $E$ and $\rho$ after completing a few necessary steps.
 \begin{enumerate}[Step 1:]
 \item $D$ is grounded relative to any $C$-rendition $\rho'$ of $G$ in a disk. \\
 Suppose that $D$ is not $\rho'$-grounded. Then all the edges of $D$ share a common cell $c$ of $\rho'$. Each path $P_i$ meets a first vertex $d_i$ of $D$ as it proceeds from $C$ to $W'$.
 The vertices $d_i$ must be nodes of $c$, and they must be distinct because they are not at the $W'$ end of their respective paths. Therefore $\vert \tilde{c} \vert > 3$ which is impossible since $c \ne \star$.
 \item In any $C$-rendition $\rho'$ of $G$ in a disk, $W'$ contains a node. \\
 This is proved using the same argument as the previous case.
 If $W'$ does not contain a node, then it follows from its connectedness that $W'$ must reside inside a single cell $c$ and the paths $P_i$ must each meet its boundary at a node.
 Since we assumed that these nodes are not in $W'$, they must be distinct, so $\vert \tilde{c} \vert > 3$. The last edge of each path $P_i$ must be in $\sigma(c)$ so $c \ne \star$, a contradiction.

 \item\label{step:ChooseRhoE} Choosing $\rho$ and $E$ \\
Let $\Delta$ be a disk. Look at the set of all pairs $(\rho, E)$ such that
\begin{itemize}
\item $\rho$ is a tight $C$-rendition of $G$ in $\Delta$, with $\Delta$ oriented such that the induced clockwise direction of $D$ agrees with its given direction.
\item $E$ is a $\rho$-grounded cycle that meets the required properties \ref{VEsubVD}, \ref{Efactors} and \ref{DeltaEsupset}
but is not necessarily proper.
(for property \ref{VEsubVD}, notice that the direction of $D$ is well defined because it is automatically $\rho$-grounded as we have already established).
\end{itemize} 
This set is not empty because a tight $\rho$ exists (since $(G, C)$ is rural), and for such $\rho$ the pair $(\rho, D)$ meets the criteria.
Among all possible choices of $(\rho, E)$ in the set, choose one where the graph $B$ in the separation $(A, B)$ is maximal.
To keep notation simple we will refer to our chosen values simply as $\rho$ and $E$.

Requirement \ref{Dclockwise} is satisfied by fiat because of the way we oriented $\Delta$.

 \item $V(W') \subseteq V(B)$ \\
 We showed that $W'$ contains a node of $\rho$. Choose a node $n$ such that $\pi(n) \in V(W')$.
 For $i = 1, \dots, 4$, let $R_i$ be a path in $W'$ connecting $y_i$ to $n$, and define $\tilde{P}_i = P_iR_i$.
 The path $\tilde{P_i}$ is grounded and intersects $D$ in a non-empty path, and therefore its track intersects the track of $D$ in a segment.
 The $D$-nodes in the intersection all appear on $\tilde{P}_i$ before it reaches $W'$ and therefore they are all in $P_i$.
 
 Let $d_i$ be the first node of $D$ on $P_i$. If we assume that the sequence $x_1, \dots, x_4$ is ordered in $C$-order, then the tracks $\tr(P_1[x_1, d_1])$ and $\tr(P_3[d_3, x_3])$ divide $\Delta_D^{\text{out}}$ into two regions.
 If $n$ is in one of these regions, then one of the tracks $\tr(\tilde{P}_2)$ and $\tr(\tilde{P}_4)$ is unable to reach $n$ without intersecting
 either $\tr(P_1[x_1, d_1])$, or  $\tr(P_3[d_3, x_3])$, or the interiors of both tracks $\tr(D[d_1, d_3])$ and $\tr(D[d_3, d_1])$. For convenience, assume that this track is $\tr(\tilde{P}_2)$.
 
 If the first case occurs then there is a node $n'$ in $\tr(P_1[x_1, d_1])$ that also belongs to $\tr(\tilde{P}_2)$. $\pi(n') \ne W'$ because $\pi(n')$ occurs on $P_1$ before it reaches $D$.
 Therefore it cannot occur in $R_2$ and therefore it must occur in $P_2$. But $P_1$ and $P_2$ do not intersect outside of $W'$, so this case is impossible. The second case is disposed of in exactly the same way.
 In the third case, $\tr(\tilde{P}_2)$ must intersect $\tr(D)$ in at least two disjoint segments.
 This implies that the intersection $P_2 \cap D$ is disconnected, which we assumed was not the case. 
 
 So we established that $n \in \diskin{D}$. Since $n \not\in V(D)$ by definition,
 it must be in the interior of $\diskin{D}$ and therefore it must belong to $\sigma(c)$ for some cell $c \subseteq \diskin{D} \subseteq \diskin{E}$. Therefore $n \in V(B)$.
The same holds for all the nodes of  $W'$. If $v$ is a non-node vertex of $W'$, then $v$ must belong to a unique cell $c$.
 Since W' is connected and not limited to a single cell, there must be a node of $W'$ in $c$.
 This node must be in $\diskin{E}$ and it cannot be on its boundary, because it is not a node of $D$ by definition, and therefore not a node of $E$ since $N(E) \subseteq N(D)$.
 Therefore it is in the interior of $\diskin{E}$ and therefore $c$ is an interior cell of $\diskin{E}$. it follows that $v \in V(B)$ and therefore $V(W') \subseteq V(B)$.
 
 \item $V(D) \subseteq V(B)$, and therefore $V(W) \subseteq V(B)$ \\
 By assumption, $\diskin{D} \subseteq \diskin{E}$. It follows immediately that all the nodes of $D$ are in $V(B)$.
 Let $v \in V(D)$ be a vertex that is not a node. Then $v$ is an interior vertex of some factor $Q_D$ of $D$.
 If $h(Q_D) \subseteq \diskin{E}$ then $v \in V(B)$. So we can assume that $h(Q_D)$ is exterior to $E$, and therefore exterior to $D$ as well. Since this cell contains a factor of $D$, it must be a border cell of $D$.
 The track of $Q_D$ separates the interior of $h(Q_D)$ (which is in $\Delta_E^{\text{out}}$ by assumption), from the interior of $\diskin{D}$ (because it is border cell and $\tr(Q_D)$ is part of $\tr(D)$ by definition).
 But $\diskin{D} \subseteq \diskin{E}$, so $\tr(Q_D)$ separates $\diskin{E}$ from $\Delta_E^{\text{out}}$. Therefore $\tr(Q_D)$ is part of $\tr(E)$, and its ends, which are consecutive nodes in $D$,
 must therefore be consecutive in $E$ as well. As a result, by property \ref{Efactors} of $E$, $Q_D$ is a factor of $E$ as well. Therefore $v \in (V(D) \cap V(E)) \subset V(B)$ and we are done.
 
  \item $\Omega \subseteq A \cap B \subseteq V(D)$ \\
  We start with the left inclusion. By definition the set $P$ is confined to the intersections $V(D) \cap \sigma(c)$ in cells $c$ that are border cells of both $D$ and $E$, where by assumption $D$ and $E$ coincide.
  Therefore $P \subseteq V(D) \cap V(E) \subseteq B$. Since any $E$-border cell $c$ is by definition a subset of $\Delta_E^{\text{out}}$, we also have $P \subseteq A$, and therefore $P \subseteq A \cap B$.
  In addition,
  \begin{align*}
  N(E) & \subseteq \tr(E) \subseteq \Delta_E^{\text{out}} \,\,\,\text{ and therefore }\,\,\, N(E) \subseteq N(\Delta_E^{\text{out}}) \subseteq A \\
  N(E) & \subseteq N(D) \cap V(E) \subseteq V(D) \cap V(E) \subseteq B
  \end{align*}
  And therefore $\Omega = N(E) \cup P \subseteq A \cap B$.
  
  To show the right inclusion, first observe that
  $$
  N(\Delta_E^{\text{out}}) \cap N(\diskin{E}) = N(\Delta_E^{\text{out}} \cap \diskin{E}) = N(\tr(E)) = N(E) \subseteq N(D) \subseteq V(D)
  $$
  and then break $A$ and $B$ into their constituents, and show the inclusion of the resulting intersections:
  \begin{alignat*}{2}
  N(\Delta_E^{\text{out}}) & \cap ( \bigcup\limits_{c \subseteq \diskin{E}} \sigma(c) ) & & \subseteq N(\Delta_E^{\text{out}}) \cap N(\diskin{E}) \subseteq V(D) \\
  ( \bigcup\limits_{c \subseteq \Delta_E^{\text{out}}} \sigma(c) ) & \cap ( \bigcup\limits_{c \subseteq \diskin{E}} \sigma(c) ) & & \subseteq N(\Delta_E^{\text{out}}) \cap N(\diskin{E}) \subseteq V(D) \\
  A & \cap (V(D) \cap V(E)) & & \subseteq V(D)
  \end{alignat*}
    
  \item $E$ is proper. \\
  If $E$ is not proper, then there is a border cell $c$ of $E$ with one of the two following properties:
  \begin{itemize}
  \item $\vert \tilde{c} \vert = 2$
  \item $\vert \tilde{c} \vert = 3$ and $\tilde{c} \subseteq N(E)$
  \end{itemize}
  If the first case occurs, let $\rho'$ be a modification of $\rho$ where the modified tie-breaker function $\tau'$ chooses the other segment of $\bd(c)$ as the preferred segment.
  With this change the pair $(\rho', E)$ is still an eligible pair in Step \ref{step:ChooseRhoE} but with a strictly larger graph $B$, contrary to the choice of $\rho$ and $E$.
  Therefore this case does not occur.
  
 In the second case, let the nodes of $c$ be $m, n, p$ listed in clockwise $E$-order as they appear on $\tr(E)$.
 Notice that since $c$ is external to $\diskin{E}$, the clockwise $\bd(c)$-order of the three nodes is the opposite order.
 As a border cell, $\sigma(c)$ contains one or two factors of $E$, so it contains at least one edge.
 None of the nodes of $c$ are isolated in $G$ since they all belong to $V(E)$.
 Therefore by Lemma \ref{TightRenditionProps}(\ref{PropThreeConnected}) the nodes of $c$ belong to a single connected component of $\sigma(c)$.

Apply Lemma \ref{lem:twononcrossingloops} to $\tr(E)$ and $\bd(c)$.
 
 If $\tr(E)$ contains two segments along $\bd(c)$, we may assume, by rotating the names of the nodes, that these are $\bd(c)[n, m]$ and $\bd(c)[p, n]$.
 Lemma \ref{lem:twononcrossingloops} guarantees that  the loop
 $$
 L_1 = \bd(c)[m, p] \tr(E)[p, m]
 $$
 has an interior disk that contains both $c$ and $\diskin{E}$.
 
 If $\tr(E)$ contains only one segment along $\bd(c)$, we can assume, by rotating node names, that this segment is $\bd(c)[n, m]$.
 Lemma \ref{lem:twononcrossingloops} guarantees that one of the two loops
 \begin{align*}
 L_0 & = \bd(c)[p, n] \tr(E)[n, p] \\
 L_1 & = \bd(c)[m, p] \tr(E)[p, m]
 \end{align*}
 has an interior disk that contains both $c$ and $\diskin{E}$. If we are lucky, the desired loop is $L_1$.
 If not, we can rotate the node names one more time, making $L_1$ the desired loop while the segment of $E$ along $\bd(c)$ becomes $\bd(c)[p, n]$.
 
 With these naming conventions, in all cases the segment(s) of $\tr(E)$ along $\bd(c)$ would be either $\bd(c)[n, m]$, $\bd(c)[p, n]$ or both.
 The respective factor(s) of $E$ in $c$ are $E[m, n]$, $E[n, p]$ or both.
 
 By Lemma \ref{TightRenditionProps}(\ref{PropTwoConnected}), there is a path $R$ in $\sigma(c)$ between the nodes $m$ and $p$ that avoids $n$.
 
 Let $E'$ be the modification of $E$ created by replacing the path $E[m, p]$ with $R$. This removes the potential factors $E[m, n]$ and $E[n, p]$ from $E'$ because $E[m, p] = E[m, n]E[n, p]$, due to the node ordering.
 As a result $E'$ has no self intersections and is therefore a simple cycle. By construction, $N(E') \subseteq N(E) \subseteq N(D)$.
 
 $E'$ is grounded  since $R$ has at least one edge, which is in $c$, and the path $E'[p, m] = E[p, m]$ has at last one edge, which is not in $c$.
 By construction, $\tr(E') = L_1$ and therefore the clockwise order on $E'$ agrees with the clockwise order on $E$, and therefore with the clockwise order on $D$.
 If $s \rightarrow t$ is a consecutive pair of nodes in $E'$ which is also a consecutive pair in $D$ then that pair is not $m \rightarrow p$ and therefore $E'[s, t] = E[s, t] = D[s, t]$.
 
 Finally, the by the property of $L_1$,  $\diskin{E} \subsetneq \diskin{L_1} = \diskin{E'}$, and $B$ becomes strictly larger by gaining the path $R$ without losing the node $n$.
 It follows that $E'$ meets all the criteria of the lemma, but it violates the maximality of $B$, a contradiction. Therefore $E$ must be proper.
 
 \item $(G[B], \Omega)$ is a rural society. \\
 We follow the same recipe as in Lemma 5.1 in \cite{NewProof}. We remove all the cells from $\rho$ that are neither interior to $\diskin{E}$ nor border cells of $E$.
 We remove all the nodes that do not belong to $\diskin{E}$ or abut a border cell.
 For each border cell $c$ of $E$, it follows from the propriety of $E$ that $\vert \tilde{c} \vert = 3 $ and exactly two of the nodes of $c$, $\alpha_c$ and $\beta_c$, are in $V(E)$. Denote its third node $z_c$.
 
 Redraw each border cell $c$ as in Figure \ref{fig:CarvingBorderCell} by first drawing a bisecting line $\ell^1_c$ through $c$ from $\alpha_c$ to $\beta_c$.
 This line carves $c$ into two regions, one of which is disjoint from $z_c$ and is denoted $C$ in the figure, with the other region denoted $B$.
Let the redrawn cell $\hat{c}$ be the region $C$.
 If $c$ is a border cell of $D$ and $\vert \sigma(c) \cap P \vert = \{ p \}$, draw a new node on the interior of $\ell^1_c$ and identify it with the vertex $p$.
 After modifying all the border cells, remove all the nodes $z_c$ from the drawing.
 
 Draw a line $\ell^2_c$ through region $B$ of $c$ from $\alpha_c$ to $\beta_c$ as in Figure  \ref{fig:CarvingBorderCell}.
 If the interior of $\ell^1_c$ has a new node $p$, make sure that $\ell^2_c$ passes through that node. Otherwise $\ell^2_c$ must be internally disjoint from $\ell^1_c$.
 By propriety, $\tr(E)$ has a single segment in each border cell $c$.
 Replace the segment of $\tr(E)$ in $c$ with $\ell^2_c$. The resulting curve is a simple loop. Let $\Delta'$ be the closed interior of this loop.
 Then $\Delta'$ contains all the interior cells and modified border cells of $E$, and the nodes on its boundary are exactly the points of $\Omega$, in the clockwise direction of $E$.
 
As a final step we redefine the flaps of each border cell $\hat{c}$ by defining $\sigma'(\hat{c}) = \sigma(c) \cap G[B]$. We leave the flaps of interior cells intact.
Notice that the propriety of $E$ implies that $z_c$ is not a vertex of $\sigma'(\hat{c})$.
 
 Altogether, this construction creates a rendition $\rho'$ on the disk $\Delta'$ of the rural society $(\bigcup\limits_{c \subseteq \Delta'} \sigma'(c), \,\,\Omega)$. We just have to show that
 $$
 \bigcup\limits_{c \subseteq \Delta'} \sigma'(c) = G[B]
 $$
 The inclusion $\bigcup\limits_{c \subseteq \Delta'} \sigma'(c) \subseteq G[B]$ is obvious.
 Conversely,
 $$
 \bigcup\limits_{c \subseteq \diskin{E}} \sigma(c) \subseteq \bigcup\limits_{c \subseteq \Delta'} \sigma'(c)
 $$
 and every vertex $v \in V(E)$ is either in $\sigma(c)$ for an internal cell $c$, or it is in $\sigma'(\hat{c})$ for a border cell $c$ of $\rho$, since we know that $v \ne z_c$.
 Therefore
 $$
 V(D) \cap V(E) \subseteq V(E) \subseteq \bigcup\limits_{c \subseteq \Delta'} \sigma'(c)
 $$
and so $B \subseteq \bigcup\limits_{c \subseteq \Delta'} \sigma'(c)$.
 
 To complete the proof we just need to verify that all the edges of $G[B]$ are accounted for.
 
Let $e \in G[B]$. Then there is a unique cell $c$ in $\rho$ such that $e \in \sigma(c)$. If $c \subset \diskin{E}$ then $e \in E(B)$ and we are done.
If $c$ is a border cell of $\tr(E)$ then $e \in \sigma'(\hat{c})$ by definition.
We are left with the case where $c$ is an external cell which is not a border. In this case there is no factor of $E$ with a home in $c$, and therefore the ends of $e$ must be in $\tr(E)$, which means that the ends of $e$ are nodes.
Therefore by Lemma \ref{TightRenditionProps}(\ref{PropHomeAlone}), $c$ must be trivial. Let $s$ and $t$ be the nodes of $c$.

Apply Lemma \ref{lem:twononcrossingloops} to $\tr(E)$ and $\bd(c)$.

 The lemma guarantees that one of the two loops
 \begin{align*}
 L_0 & = \bd(c)[s, t] \tr(E)[t, s] \\
 L_1 & = \bd(c)[t, s] \tr(E)[s, t]
 \end{align*}
 has an interior disk that contains both $c$ and $\diskin{E}$. By flipping the names $s$ and $t$ if necessary, we can guarantee that the desired loop is $L_1$. 

Create a cycle $E'$ by replacing the path $E[t, s]$ with the edge $e$.
Create a $C$-rendition $\rho'$ of $G$ by modifying the tie-breaking function $\tau$, if necessary, such that $\tau'(c)$ chooses the boundary segment of $\bd(c)$ that is used by $L_1$, namely $\bd(c)[t, s]$.

With this modification, we have $\tr_{\rho'}(E') = L_1$.

Using the same arguments we used in the proof of propriety of $E$ we can conclude that that $E'$ is a simple cycle which is $\rho'$-grounded,
with the same clockwise direction as the direction inherited from the clockwise direction of $D$, with $N(E') \subset N(D)$, and with the same factors as $D$ for nodes that are consecutive in both $D$ and $E'$.
As before, $\diskin{E} \subsetneq \diskin{L_1} = \diskin{E'}$, and $B$ grows by adding the edge $e$. It follows that the pair $(\rho', E')$ violates the maximality of $(\rho, E)$, and this case cannot occur. This concludes the proof.
\end{enumerate}
 \end{proof}

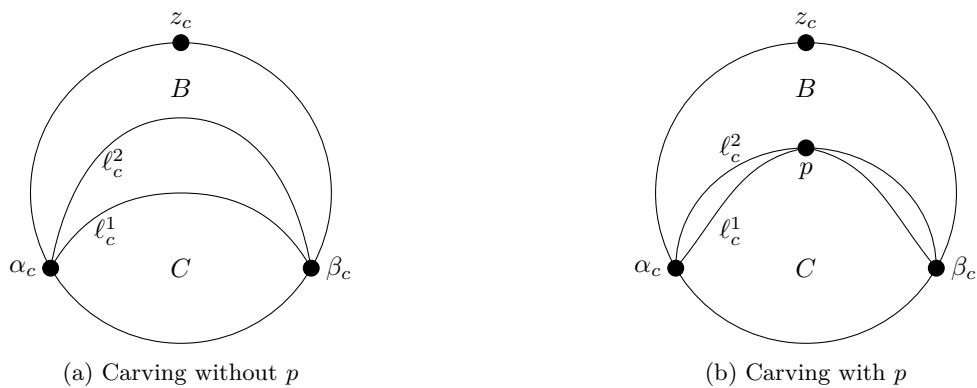
\begin{figure}[htb]
\centering
\begin{subfigure}{.45\linewidth}
    \centering
    \begin{tikzpicture}
    \draw (0, 0) circle (2);

    \filldraw (1.732, -1) circle (3 pt);
    \node at (2.1, -1) {$\beta_c$};
    \filldraw (-1.732, -1) circle (3 pt);
    \node at (-2.1, -1) {$\alpha_c$};
    \filldraw (0, 2) circle (3 pt);
    \node at (0, 2.3) {$z_c$};

    \node at (-1, -0.5) {$\ell^1_c$};
    \node at (-0.9, 0.4) {$\ell^2_c$};

    \node at (0, 1.4) {$B$};
    \node at (0, -1) {$C$};

    \draw (-1.732, -1) to[in = 180, out = 60] (0, 0);
    \draw (0, 0) to[in = 120, out = 0] (1.732, -1);

    \draw (-1.732, -1) to[in = 180, out = 80] (0, 1);
    \draw (0, 1) to[in = 100, out = 0] (1.732, -1);

    \end{tikzpicture}
    \caption{Carving without $p$}
    \label{fig:CarvingBorderCellWithoutP}
\end{subfigure}
\hfill
\begin{subfigure}{.45\linewidth}
    \centering
    \begin{tikzpicture}
    \draw (0, 0) circle (2);

    \filldraw (1.732, -1) circle (3 pt);
    \node at (2.1, -1) {$\beta_c$};
    \filldraw (-1.732, -1) circle (3 pt);
    \node at (-2.1, -1) {$\alpha_c$};
    \filldraw (0, 2) circle (3 pt);
    \node at (0, 2.3) {$z_c$};
    \filldraw (0, 0.6) circle (3 pt);
    \node at (0, 0.3) {$p$};

    \node at (-1, -0.5) {$\ell^1_c$};
    \node at (-1, 0.6) {$\ell^2_c$};

    \node at (0, 1.4) {$B$};
    \node at (0, -1) {$C$};

    \draw (-1.732, -1) to[in = 190, out = 50] (0, 0.6);
    \draw (0, 0.6) to[in = 130, out = -10] (1.732, -1);

    \draw (-1.732, -1) to[in = 180, out = 90] (0, 0.6);
    \draw (0, 0.6) to[in = 90, out = 0] (1.732, -1);

    \end{tikzpicture}
    \caption{Carving with $p$}
    \label{fig:CarvingBorderCellWithP}
\end{subfigure}
\caption{Carving a border cell $c$}
\label{fig:CarvingBorderCell}
\end{figure}

\subsection{Where the rain gets in: Theorems 5.2 and 6.1 of \cite{NewProof} revisited}

We are now ready to fix the two main results in \cite{NewProof}, the Flat Wall Theorem (5.2) and the hereditary property of flat walls (Theorem 6.1).
We need a technical lemma that does the bulk of the work for both.
The lemma relies on 5.1' to show that under mild assumptions, a wall $W$ in a rural society $(G, C)$ is flat (as in Definition \ref{def:flatwall}.)

The main challenge in the proof of the lemma is to show that under the right circumstances, any maximal choice of the set $P$ of peg choices in 5.1'
yields a circular order $\Omega$ that contains a peg choice from each peg interval of $W$.

\begin{figure}[htb]
\centering
\begin{subfigure}{.45\linewidth}
    \centering
  \scalebox{0.5} {
    \begin{tikzpicture}
    \draw (3, 0) -- (7, 0);
    \draw (0, 2) -- (10.5, 2);
    \draw (0, 4) -- (1, 4);
    \draw [line width = 5pt] (1, 4) -- (9, 4);
    \draw (9, 4) -- (10.5, 4);
    \draw (1, 4) -- (1, 2);
    \draw (9, 4) -- (9, 2);
    \draw [dashed] (5, 5.5) -- (5, 4);
    \draw (5, 2) -- (5, 0);

    \filldraw [white] (5,6) circle (3pt);

    \filldraw [black] (1,4) circle (3pt);
    \node at (1, 4.4) {\Large $\alpha$};
    \filldraw [black] (2,4) circle (6pt);
    \filldraw [black] (3,4) circle (6pt);
    \filldraw [black] (5,4) circle (6pt);
    \node at (5, 3.5) {\Large $m$};
    \filldraw [black] (7,4) circle (6pt);
    \filldraw [black] (9,4) circle (3pt);
    \node at (9, 4.4) {\Large $\beta$};

    \filldraw [black] (1,2) circle (3pt);
    \filldraw [black] (5,2) circle (3pt);
    \filldraw [black] (9,2) circle (3pt);

    \filldraw [black] (5,0) circle (3pt);

    \end{tikzpicture}
  }
    \caption{A Top or bottom brick}
\end{subfigure}
    \hfill
\begin{subfigure}{.45\linewidth}
    \centering
  \scalebox{0.5} {
  \begin{tikzpicture}
  \draw (3, 0) -- (7, 0);
  \draw (0, 2) -- (5, 2);
  \draw [line width = 5pt] (5, 2) -- (9, 2);
  \draw [dashed] (9, 2) -- (10.5, 2);
  \draw (0, 4) -- (5, 4);
  \draw [line width = 5pt] (5, 4) -- (9, 4);
  \draw [dashed] (9, 4) -- (10.5, 4);
  \draw (3, 6) -- (7, 6);

  \draw (5, 6) -- (5, 4);
  \draw (1, 4) -- (1, 2);
  \draw [line width = 5pt] (9, 4) -- (9, 2);
  \draw (5, 2) -- (5, 0);

  \filldraw [black] (5,6) circle (3pt);

  \filldraw [black] (1,4) circle (3pt);
  \filldraw [black] (5,4) circle (3pt);
  \node at (5, 3.6) {\Large $\alpha$};
  \filldraw [black] (7,4) circle (6pt);
  \filldraw [black] (9,4) circle (6pt);
  \node at (9, 4.5) {\Large $m$};

  \filldraw [black] (9,3) circle (6pt);

  \filldraw [black] (1,2) circle (3pt);
  \filldraw [black] (5,2) circle (3pt);
  \node at (5, 2.4) {\Large $\beta$};
  \filldraw [black] (6,2) circle (6pt);
  \filldraw [black] (8,2) circle (6pt);
  \filldraw [black] (5,2) circle (3pt);
  \filldraw [black] (9,2) circle (6pt);

  \filldraw [black] (5,0) circle (3pt);
  
  \end{tikzpicture}
  }
    \caption{A Left or right side brick}
\end{subfigure}

\bigskip
\begin{subfigure}{.45\linewidth}
    \centering
  \scalebox{0.5} {
  \begin{tikzpicture}
  \draw (3, 0) -- (7, 0);
  \draw (0, 2) -- (5, 2);
  \draw [line width = 5pt] (5, 2) -- (9, 2);
  \draw [dashed] (9, 2) -- (10.5, 2);
  \draw (0, 4) -- (1, 4);
  \draw [line width = 5pt] (1, 4) -- (9, 4);
  \draw [dashed] (9, 4) -- (10.5, 4);

  \draw (1, 4) -- (1, 2);
  \draw [line width = 5pt] (9, 4) -- (9, 2);
  \draw [dashed] (5, 5.5) -- (5, 4);
  \draw (5, 2) -- (5, 0);

  \filldraw [white] (5,6) circle (3pt);

  \filldraw [black] (1,4) circle (3pt);
  \node at (1, 4.4) {\Large $\alpha$};
  \filldraw [black] (2,4) circle (6pt);
  \filldraw [black] (3,4) circle (6pt);
  \filldraw [black] (5,4) circle (6pt);
  \filldraw [black] (7,4) circle (6pt);
  \filldraw [black] (9,4) circle (6pt);
  \node at (9, 4.5) {\Large $m$};

  \filldraw [black] (9,3) circle (6pt);

  \filldraw [black] (1,2) circle (3pt);
  \filldraw [black] (5,2) circle (3pt);
  \node at (5, 2.4) {\Large $\beta$};
  \filldraw [black] (6,2) circle (6pt);
  \filldraw [black] (8,2) circle (6pt);
  \filldraw [black] (5,2) circle (3pt);
  \filldraw [black] (9,2) circle (6pt);

  \filldraw [black] (5,0) circle (3pt);
  
  \end{tikzpicture}
  }
    \caption{A Bulging corner brick}
\end{subfigure}
\hfill
\begin{subfigure}{.45\linewidth}
    \centering
  \scalebox{0.5} {
  \begin{tikzpicture}
  \draw (3, 0) -- (7, 0);
  \draw (0, 2) -- (10.5, 2);
  \draw (0, 4) -- (1, 4);
  \draw [line width = 5pt] (1, 4) -- (9, 4);

  \draw (1, 4) -- (1, 2);
  \draw [line width = 5pt] (9, 4) -- (9, 2);
  \draw [dashed] (5, 5.5) -- (5, 4);
  \draw (5, 2) -- (5, 0);

  \filldraw [white] (5,6) circle (3pt);

  \filldraw [black] (1,4) circle (3pt);
  \node at (1, 4.4) {\Large $\alpha$};
  \filldraw [black] (2,4) circle (6pt);
  \filldraw [black] (3,4) circle (6pt);
  \filldraw [black] (5,4) circle (6pt);
  \node at (5, 3.5) {\Large $m$};
  \filldraw [black] (7,4) circle (6pt);
  \filldraw [black] (9,4) circle (6pt);

  \filldraw [black] (9,3) circle (6pt);

  \filldraw [black] (1,2) circle (3pt);
  \filldraw [black] (5,2) circle (3pt);
  \filldraw [black] (5,2) circle (3pt);
  \filldraw [black] (9,2) circle (3pt);
  \node at (9, 1.5) {\Large $\beta$};

  \filldraw [black] (5,0) circle (3pt);
  
  \end{tikzpicture}
  }
    \caption{A Recessed corner brick}
\end{subfigure}
\caption{Border bricks with peg intervals}
\label{fig:bricksandpegintervals}
\end{figure}
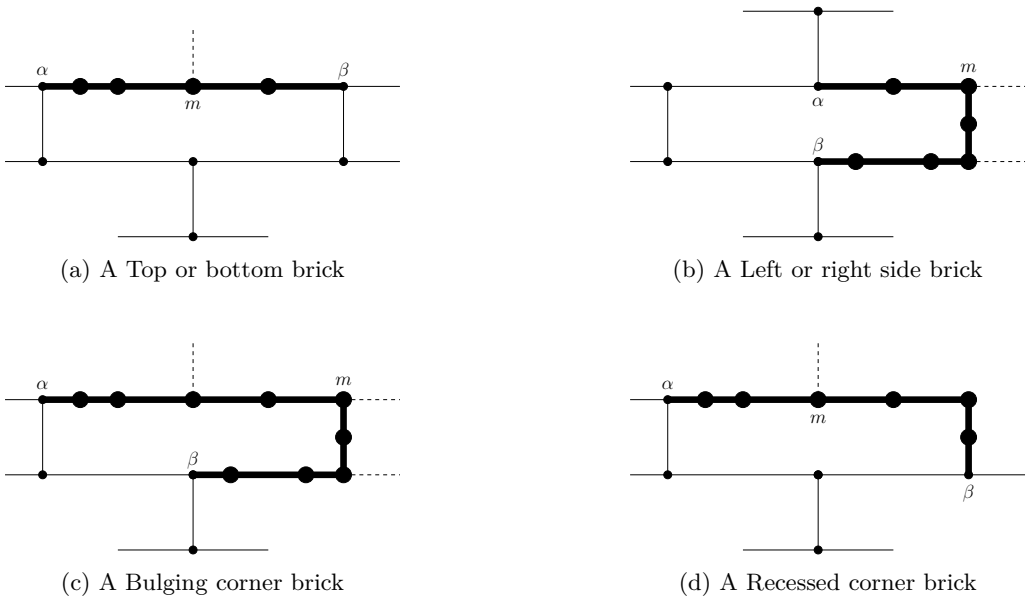

%%% pegging paths figure

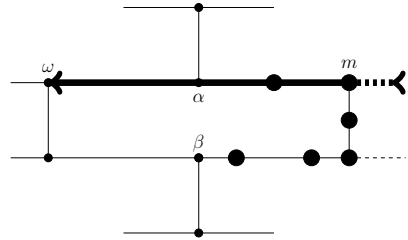
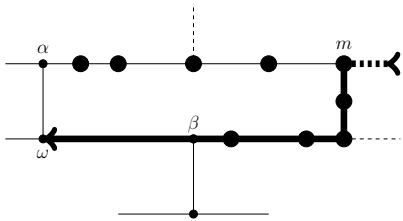
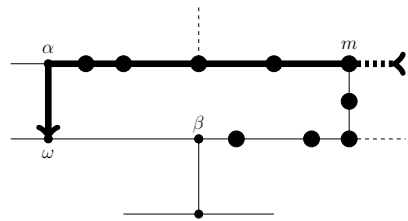
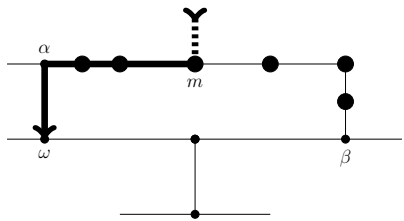
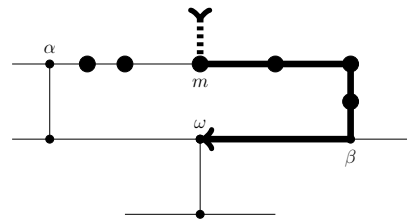
\begin{figure}[!htb]
\centering
\begin{subfigure}{.45\linewidth}
    \centering
  \scalebox{0.5} {
    \begin{tikzpicture}
    \draw (3, 0) -- (7, 0);
    \draw (0, 2) -- (10.5, 2);
    \draw (0, 4) -- (5, 4);
    \draw [line width = 5pt] (5, 4) -- (9, 4);
    \draw (9, 4) -- (10.5, 4);
    \draw (1, 4) -- (1, 2);
    \draw [-to, line width = 5pt] (9, 4) -- (9, 2);
    \draw [to reversed-, dashed, line width = 5pt] (5, 5.5) -- (5, 4);
    \draw (5, 2) -- (5, 0);

    \filldraw [white] (5,6) circle (3pt);

    \filldraw [black] (1,4) circle (3pt);
    \node at (1, 4.4) {\Large $\alpha$};
    \filldraw [black] (2,4) circle (6pt);
    \filldraw [black] (3,4) circle (6pt);
    \filldraw [black] (5,4) circle (6pt);
    \node at (5, 3.5) {\Large $m$};
    \filldraw [black] (7,4) circle (6pt);
    \filldraw [black] (9,4) circle (3pt);
    \node at (9, 4.4) {\Large $\beta$};

    \filldraw [black] (1,2) circle (3pt);
    \filldraw [black] (5,2) circle (3pt);
    \filldraw [black] (9,2) circle (3pt);
    \node at (9, 1.6) {\Large $\omega$};

    \filldraw [black] (5,0) circle (3pt);

    \end{tikzpicture}
  }
    \caption{Top or bottom brick, first pegging path}
\end{subfigure}
    \hfill
\begin{subfigure}{.45\linewidth}
    \centering
  \scalebox{0.5} {
    \begin{tikzpicture}
    \draw (3, 0) -- (7, 0);
    \draw (0, 2) -- (10.5, 2);
    \draw (0, 4) -- (1, 4);
    \draw [line width = 5pt] (1, 4) -- (5, 4);
    \draw (5, 4) -- (10.5, 4);
    \draw [-to, line width = 5pt] (1, 4) -- (1, 2);
    \draw (9, 4) -- (9, 2);
    \draw [to reversed-, dashed, line width = 5pt] (5, 5.5) -- (5, 4);
    \draw (5, 2) -- (5, 0);

    \filldraw [white] (5,6) circle (3pt);

    \filldraw [black] (1,4) circle (3pt);
    \node at (1, 4.4) {\Large $\alpha$};
    \filldraw [black] (2,4) circle (6pt);
    \filldraw [black] (3,4) circle (6pt);
    \filldraw [black] (5,4) circle (6pt);
    \node at (5, 3.5) {\Large $m$};
    \filldraw [black] (7,4) circle (6pt);
    \filldraw [black] (9,4) circle (3pt);
    \node at (9, 4.4) {\Large $\beta$};

    \filldraw [black] (1,2) circle (3pt);
    \node at (1, 1.6) {\Large $\omega$};
    \filldraw [black] (5,2) circle (3pt);
    \filldraw [black] (9,2) circle (3pt);

    \filldraw [black] (5,0) circle (3pt);

    \end{tikzpicture}
  }
    \caption{Top or bottom brick, second pegging path}
\end{subfigure}

\bigskip
\begin{subfigure}{.45\linewidth}
    \centering
  \scalebox{0.5} {
  \begin{tikzpicture}
  \draw (3, 0) -- (7, 0);
  \draw (0, 2) -- (1, 2);
  \draw [to-, line width = 5pt] (1, 2) -- (9, 2);
  \draw [dashed] (9, 2) -- (10.5, 2);
  \draw (0, 4) -- (9, 4);
  \draw [-to reversed, dashed, line width = 5pt] (9, 4) -- (10.5, 4);
  \draw (3, 6) -- (7, 6);

  \draw (5, 6) -- (5, 4);
  \draw (1, 4) -- (1, 2);
  \draw [line width = 5pt] (9, 4) -- (9, 2);
  \draw (5, 2) -- (5, 0);

  \filldraw [black] (5,6) circle (3pt);

  \filldraw [black] (1,4) circle (3pt);
  \filldraw [black] (5,4) circle (3pt);
  \node at (5, 3.6) {\Large $\alpha$};
  \filldraw [black] (7,4) circle (6pt);
  \filldraw [black] (9,4) circle (6pt);
  \node at (9, 4.5) {\Large $m$};

  \filldraw [black] (9,3) circle (6pt);

  \filldraw [black] (1,2) circle (3pt);
  \node at (1, 1.6) {\Large $\omega$};
  \filldraw [black] (5,2) circle (3pt);
  \node at (5, 2.4) {\Large $\beta$};
  \filldraw [black] (6,2) circle (6pt);
  \filldraw [black] (8,2) circle (6pt);
  \filldraw [black] (5,2) circle (3pt);
  \filldraw [black] (9,2) circle (6pt);

  \filldraw [black] (5,0) circle (3pt);
  
  \end{tikzpicture}
  }
    \caption{Left or right side brick, first pegging path}
\end{subfigure}
\hfill
\begin{subfigure}{.45\linewidth}
    \centering
  \scalebox{0.5} {
  \begin{tikzpicture}
  \draw (3, 0) -- (7, 0);
  \draw (0, 2) -- (9, 2);
  \draw [dashed] (9, 2) -- (10.5, 2);
  \draw (0, 4) -- (1, 4);
  \draw [to-, line width = 5pt] (1, 4) -- (9, 4);
  \draw [-to reversed, dashed, line width = 5pt] (9, 4) -- (10.5, 4);
  \draw (3, 6) -- (7, 6);

  \draw (5, 6) -- (5, 4);
  \draw (1, 4) -- (1, 2);
  \draw (9, 4) -- (9, 2);
  \draw (5, 2) -- (5, 0);

  \filldraw [black] (5,6) circle (3pt);

  \filldraw [black] (1,4) circle (3pt);
  \node at (1, 4.4) {\Large $\omega$};
  \filldraw [black] (5,4) circle (3pt);
  \node at (5, 3.6) {\Large $\alpha$};
  \filldraw [black] (7,4) circle (6pt);
  \filldraw [black] (9,4) circle (6pt);
  \node at (9, 4.5) {\Large $m$};

  \filldraw [black] (9,3) circle (6pt);

  \filldraw [black] (1,2) circle (3pt);
  \filldraw [black] (5,2) circle (3pt);
  \node at (5, 2.4) {\Large $\beta$};
  \filldraw [black] (6,2) circle (6pt);
  \filldraw [black] (8,2) circle (6pt);
  \filldraw [black] (5,2) circle (3pt);
  \filldraw [black] (9,2) circle (6pt);

  \filldraw [black] (5,0) circle (3pt);
  
  \end{tikzpicture}
  }
    \caption{Left or right side brick, second pegging path}
\end{subfigure}

\bigskip
\begin{subfigure}{.45\linewidth}
    \centering
  \scalebox{0.5} {
  \begin{tikzpicture}
  \draw (3, 0) -- (7, 0);
  \draw (0, 2) -- (1, 2);
  \draw [to-, line width = 5pt](1, 2) -- (9, 2);
  \draw [dashed] (9, 2) -- (10.5, 2);
  \draw (0, 4) -- (9, 4);
  \draw [-to reversed, dashed, line width=5pt] (9, 4) -- (10.5, 4);

  \draw (1, 4) -- (1, 2);
  \draw [line width = 5pt] (9, 4) -- (9, 2);
  \draw [dashed] (5, 5.5) -- (5, 4);
  \draw (5, 2) -- (5, 0);

  \filldraw [white] (5,6) circle (3pt);

  \filldraw [black] (1,4) circle (3pt);
  \node at (1, 4.4) {\Large $\alpha$};
  \filldraw [black] (2,4) circle (6pt);
  \filldraw [black] (3,4) circle (6pt);
  \filldraw [black] (5,4) circle (6pt);
  \filldraw [black] (7,4) circle (6pt);
  \filldraw [black] (9,4) circle (6pt);
  \node at (9, 4.5) {\Large $m$};

  \filldraw [black] (9,3) circle (6pt);

  \filldraw [black] (1,2) circle (3pt);
  \node at (1, 1.6) {\Large $\omega$};
  \filldraw [black] (5,2) circle (3pt);
  \node at (5, 2.4) {\Large $\beta$};
  \filldraw [black] (6,2) circle (6pt);
  \filldraw [black] (8,2) circle (6pt);
  \filldraw [black] (5,2) circle (3pt);
  \filldraw [black] (9,2) circle (6pt);

  \filldraw [black] (5,0) circle (3pt);
  
  \end{tikzpicture}
  }
    \caption{Bulging corner brick, first pegging path}
\end{subfigure}
\hfill
\begin{subfigure}{.45\linewidth}
    \centering
  \scalebox{0.5} {
  \begin{tikzpicture}
  \draw (3, 0) -- (7, 0);
  \draw (0, 2) -- (9, 2);
  \draw [dashed] (9, 2) -- (10.5, 2);
  \draw (0, 4) -- (1, 4);
  \draw [line width = 5pt] (1, 4) -- (9, 4);
  \draw [-to reversed, dashed, line width=5pt] (9, 4) -- (10.5, 4);

  \draw [-to, line width=5pt] (1, 4) -- (1, 2);
  \draw (9, 4) -- (9, 2);
  \draw [dashed] (5, 5.5) -- (5, 4);
  \draw (5, 2) -- (5, 0);

  \filldraw [white] (5,6) circle (3pt);

  \filldraw [black] (1,4) circle (3pt);
  \node at (1, 4.4) {\Large $\alpha$};
  \filldraw [black] (2,4) circle (6pt);
  \filldraw [black] (3,4) circle (6pt);
  \filldraw [black] (5,4) circle (6pt);
  \filldraw [black] (7,4) circle (6pt);
  \filldraw [black] (9,4) circle (6pt);
  \node at (9, 4.5) {\Large $m$};

  \filldraw [black] (9,3) circle (6pt);

  \filldraw [black] (1,2) circle (3pt);
  \node at (1, 1.6) {\Large $\omega$};
  \filldraw [black] (5,2) circle (3pt);
  \node at (5, 2.4) {\Large $\beta$};
  \filldraw [black] (6,2) circle (6pt);
  \filldraw [black] (8,2) circle (6pt);
  \filldraw [black] (5,2) circle (3pt);
  \filldraw [black] (9,2) circle (6pt);

  \filldraw [black] (5,0) circle (3pt);
  
  \end{tikzpicture}
  }
    \caption{Bulging corner brick, second pegging path}
\end{subfigure}

\bigskip
\begin{subfigure}{.45\linewidth}
    \centering
  \scalebox{0.5} {
  \begin{tikzpicture}
  \draw (3, 0) -- (7, 0);
  \draw (0, 2) -- (10.5, 2);
  \draw (0, 4) -- (1, 4);
  \draw [line width = 5pt] (1, 4) -- (5, 4);
  \draw (5, 4) -- (9, 4);

  \draw [-to, line width = 5pt] (1, 4) -- (1, 2);
  \draw (9, 4) -- (9, 2);
  \draw [to reversed-, dashed, line width = 5pt] (5, 5.5) -- (5, 4);
  \draw (5, 2) -- (5, 0);

  \filldraw [white] (5,6) circle (3pt);

  \filldraw [black] (1,4) circle (3pt);
  \node at (1, 4.4) {\Large $\alpha$};
  \filldraw [black] (2,4) circle (6pt);
  \filldraw [black] (3,4) circle (6pt);
  \filldraw [black] (5,4) circle (6pt);
  \node at (5, 3.5) {\Large $m$};
  \filldraw [black] (7,4) circle (6pt);
  \filldraw [black] (9,4) circle (6pt);

  \filldraw [black] (9,3) circle (6pt);

  \filldraw [black] (1,2) circle (3pt);
  \node at (1, 1.6) {\Large $\omega$};
  \filldraw [black] (5,2) circle (3pt);
  \filldraw [black] (5,2) circle (3pt);
  \filldraw [black] (9,2) circle (3pt);
  \node at (9, 1.5) {\Large $\beta$};

  \filldraw [black] (5,0) circle (3pt);
  
  \end{tikzpicture}
  }
    \caption{Recessed corner brick, first pegging path}
\end{subfigure}
\hfill
\begin{subfigure}{.45\linewidth}
    \centering
  \scalebox{0.5} {
  \begin{tikzpicture}
  \draw (3, 0) -- (7, 0);
  \draw (0, 2) -- (5, 2);
  \draw [to-, line width = 5pt] (5, 2) -- (9, 2);
  \draw (9, 2) -- (10.5, 2);
  \draw (0, 4) -- (5, 4);
  \draw [line width = 5pt] (5, 4) -- (9, 4);

  \draw (1, 4) -- (1, 2);
  \draw [line width = 5pt] (9, 4) -- (9, 2);
  \draw [to reversed-, dashed, line width = 5pt] (5, 5.5) -- (5, 4);
  \draw (5, 2) -- (5, 0);

  \filldraw [white] (5,6) circle (3pt);

  \filldraw [black] (1,4) circle (3pt);
  \node at (1, 4.4) {\Large $\alpha$};
  \filldraw [black] (2,4) circle (6pt);
  \filldraw [black] (3,4) circle (6pt);
  \filldraw [black] (5,4) circle (6pt);
  \node at (5, 3.5) {\Large $m$};
  \filldraw [black] (7,4) circle (6pt);
  \filldraw [black] (9,4) circle (6pt);

  \filldraw [black] (9,3) circle (6pt);

  \filldraw [black] (1,2) circle (3pt);
  \filldraw [black] (5,2) circle (3pt);
  \node at (5, 2.4) {\Large $\omega$};
  \filldraw [black] (5,2) circle (3pt);
  \filldraw [black] (9,2) circle (3pt);
  \node at (9, 1.5) {\Large $\beta$};

  \filldraw [black] (5,0) circle (3pt);
  
  \end{tikzpicture}
  }
  \caption{Recessed corner brick, second pegging path}
\end{subfigure}
\caption{Border bricks with their pegging paths}
\label{fig:bricksandpeggingpaths}
\end{figure}

\begin{lem}\label{lem:Wiswallin5.1}
Let $(G, C)$ be a rural society, $W \subseteq G$ a wall of height $r \ge 3$, and $D \subset W$ the boundary of $W$.
Assume that each peg interval $I$ of $D$ has a simple path $R_I$ from $C$ to an interior vertex of $I$, such that $R_I$ does not intersect $V(W)$ except at its $I$ terminus.
Let $I_1, \dots, I_4$ be the peg intervals of the corner bricks of $W$. Assume that $R_{I_1}, \dots, R_{I_4}$ are vertex disjoint. Then $W$ is flat.
\end{lem}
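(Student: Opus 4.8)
The plan is to reduce the statement to Lemma 5.1$'$ and then squeeze condition~(3) of Definition~\ref{def:flatwall} out of the pegging paths. First I build the four vertex-disjoint paths that 5.1$'$ needs. For a corner peg interval $I_j$ the path $R_{I_j}$ ends at an interior (peg) vertex $m_j$; I extend it along $D$ inside $I_j$ from $m_j$ to one of the two branch-vertex ends of $I_j$, and then along the (possibly subdivided) rung hanging off that branch vertex into $W'=W\setminus V(D)$. This is exactly the recipe of Figure~\ref{fig:bricksandpeggingpaths}, and the two alternative extensions it offers per brick type let me keep the four resulting paths $P_1,\dots,P_4$ pairwise vertex-disjoint (the $R_{I_j}$ are disjoint and meet $W$ only at their $D$-ends, and each extension stays inside its own $I_j$ before diving into $W'$). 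Since $W'$ is connected for $r\ge 3$, since $|C|\ge 4$ (the $C$-ends of the disjoint $R_{I_j}$ are distinct), and since each $P_j\cap D$ is the nonempty path from $m_j$ to the chosen branch vertex, Lemma 5.1$'$ applies and produces a tight $C$-rendition $\rho$ in an oriented disk, a proper grounded cycle $E$, and the separation $(A,B)$. Conditions (1), (2) and (4) of Definition~\ref{def:flatwall} now hold for $(A,B)$ and any admissible peg-choice set $P$, with $\Omega=N(E)\cup P$ carrying the circular order induced by the clockwise direction of $D$; only condition~(3) is left.

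\textbf{Locating the pegging vertices --- the main obstacle.} I must choose $P$ so that $\Omega$ meets the interior of every peg interval $I$ of $W$. Fix such an $I$, with pegging path $R_I$ ending at an interior vertex $m_I$ (for a corner interval use $R_{I_j}\subseteq P_j$, which still meets $W$ only at $m_j$). Since $R_I$ meets $W$ only at $m_I$ and $A\cap B\subseteq V(D)\subseteq V(W)$, the graph $R_I-m_I$ is connected, disjoint from $A\cap B$, and contains its $C$-end $x_I$, which lies in $N(\diskout{E})\setminus V(D)\subseteq A\setminus B$; hence $R_I-m_I\subseteq A\setminus B$, and the last edge of $R_I$ forces $m_I\in A$. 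With $m_I\in V(D)\subseteq V(B)$ this gives $m_I\in A\cap B$ and pins down where $m_I$ sits in $\rho$. If $m_I$ is a node it must be a node of $E$, because a node of $A$ that is not on $E$ is strictly outside $\diskin{E}$ and hence not in $B$; then $m_I\in N(E)\subseteq\Omega$ whatever $P$ is. If $m_I$ is a non-node, its unique cell $c_I$ satisfies $c_I\subseteq\diskout{E}$, so by the ``$V(D)\subseteq V(B)$'' step inside the proof of Lemma 5.1$'$ the $D$-factor $Q_I$ through $m_I$ is also a factor of $E$ and $c_I$ is a border cell of both $D$ and $E$; thus $m_I$ is admissible for $P$. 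Taking $P$ to be exactly the collection of these non-node vertices $m_I$ then makes $\Omega$ meet the interior of every peg interval. The delicate point is the topological bookkeeping around $(A,B)$: getting ``$R_I$ avoids $W$ except at $m_I$'' to genuinely confine $m_I$ as above leans on the precise description of the separation in 5.1$'$ and on reusing its internal ``$V(D)\subseteq V(B)$'' argument.

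\textbf{Admissibility of $P$.} Finally I check $|P\cap\sigma(c)|\le 1$ for every border cell $c$ of both $D$ and $E$ (the condition $P\cap\sigma(c)=\emptyset$ for all other cells being immediate). If $c_I=c_{I'}$ for peg intervals $I\ne I'$, then $m_I,m_{I'}$ are interior non-node vertices of $D$-factors homed at that cell, both of which are $E$-factors, so all of their ends lie in $N(E)$; since a cell homes at most two $D$-factors and, if two, they share a node --- which would then be a third node of $c_I$ lying in $N(E)$, contradicting propriety of $E$ --- there is only one such $D$-factor $Q$, and $\mathrm{int}(Q)$ contains no node and no branch vertex of $W$ (a branch vertex internal to $Q$ would be a non-node whose rung edge drops a $W'$-vertex into the external cell $c_I$, impossible because any cell meeting $W'$ is interior to $\diskin{E}$). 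So $\mathrm{int}(Q)$ lies inside a single peg interval and $I=I'$. The path construction of the first step (navigating the corner brick types of Figures~\ref{fig:bricksandpegintervals}--\ref{fig:bricksandpeggingpaths}) and this collision check are routine case analysis; the real work is the confinement argument of the second step.
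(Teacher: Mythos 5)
Your proof is correct, and it takes a genuinely different route from the paper's for the crucial step (showing $\Omega$ meets the interior of every peg interval). The paper argues by \emph{contradiction}: it constructs two auxiliary pegging paths $P^{\alpha}_I = R_I S^{\alpha}_I T_I$ and $P^{\beta}_I = R_I S^{\beta}_I T_I$ for \emph{every} peg interval, uses them to show that if $\Omega$ misses the interior of some $I = D[\alpha,\beta]$ then $\alpha,\beta$ are consecutive nodes of $E$, then carefully analyzes the home cell $h(E[\alpha,\beta])$, shows the factors of $I$ must live in $\diskin{E}$, and finally derives a contradiction via $R_I$. You instead argue \emph{directly}: from $R_I - m_I$ being connected, disjoint from $A\cap B$, and containing the $C$-end $x_I \in A\setminus B$, you confine $m_I$ to $A$, hence to $A\cap B$, and then split on whether $m_I$ is a node (forcing $m_I \in N(E)$, since a node of $A$ off $E$ cannot lie in $B$) or a non-node (forcing its cell $c_I \subseteq \diskout{E}$ to be a border cell of both $D$ and $E$ by reusing the ``$V(D)\subseteq V(B)$'' argument from Lemma 5.1$'$, so $m_I$ is admissible for $P$). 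Your approach is shorter and avoids the construction of the $S^{\alpha}_I$, $S^{\beta}_I$ and $T_I$ paths for non-corner intervals entirely; it buys a cleaner explicit description of a working $\Omega$ rather than a proof that \emph{any maximal} $P$ works. Your admissibility check ($|P\cap\sigma(c)|\le 1$, using propriety of $E$ to rule out two $E$-factors homed at $c$, and the ``cells touching $W'$ are interior'' fact to rule out a branch vertex internal to $Q$) is sound and has no analogue in the paper's proof, since the paper sidesteps it by invoking maximality of $P$.
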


 \begin{proof}
The peg intervals of $W$ occur along $D$. Figure \ref{fig:bricksandpegintervals} shows all the possible types of $W$ border bricks that carry a peg interval along their boundaries.
The dashed lines are possible edges of $G$ outside of $W$ that connect to its peg intervals, while $W$-edges are shown as solid lines.
The peg intervals themselves are highlighted, and their ends are marked by $\alpha$ and $\beta$, so that each depicted peg interval, as shown, is $D[\alpha, \beta]$ in the clockwise direction.
In the interior of each peg interval $D[\alpha, \beta]$ the terminus of $R_I$ is marked as $m = m_{\alpha \beta}$. While $m$ has degree $3$ in $G$ it only has degree $2$ in $W$.

The peg intervals of reflected bricks (e.g. bottom bricks and right side bricks) are $D[\beta, \alpha$] in the clockwise direction. The following analysis applies to the bricks as depicted.
To analyze the reflected bricks, $\alpha$ and $\beta$ need to be interchanged.
Notice that the boundary $D$ also passes along recessed side bricks, that are not depicted. While these are border bricks, they do not possess peg intervals, since in the elementary wall they do not have degree $2$ vertices.

We start by constructing {\em pegging paths} $S^{\alpha}_I$ and $S^{\beta}_I$ for each peg interval (see Figure \ref{fig:bricksandpeggingpaths} for pegging paths.)
For each border brick $B$ of $W$ with peg interval $I$, we construct $S^{\alpha}_I$ and $S^{\beta}_I$ along the boundary of $B$,
both starting at the terminus $m$ of $R_I$, continuing towards the vertex $\alpha$ or $\beta$, respectively, and ending at the vertex $\omega$.
Notice that $\omega \in V(W) \setminus V(D)$.

Apply Lemma 5.1' to $(G, C)$, $W$, $D$ and the concatenated paths $P_1 = R_{I_1}S^{\alpha}_{I_1}, \dots, P_4 = R_{I_4}S^{\alpha}_{I_4}$.
It is not hard to see that all the conditions of the lemma are met. In particular $\vert C \vert \ge 4$ because $R_{I_1}, \dots, R_{I_4}$ are vertex disjoint.
We can conclude that there is a rendition $\rho$ that makes $D$ grounded, and a proper grounded cycle $E$ that meet the conclusions of Lemma 5.1'.
Using the notation of 5.1', the separation $(A, B)$ and the circular order $\Omega$ almost prove that $W$ is flat.
The only thing left to prove is that the set $P$ of peg choices can be chosen such that $\Omega$ contains a peg choice from each peg interval.
We will show that this condition holds for all maximal choices of $P$.

The proof of Lemma 5.1' establishes that there is a node $n$ of $W \setminus V(D)$ in the interior of $\diskin{D}$.
For each peg interval define $T_I$ to be a path in $W \setminus V(D)$ leading from $\omega$ to $n$ . Define
\begin{align*}
P^{\alpha}_I  & = R_I S^{\alpha}_I T_I  \\
P^{\beta}_I  & = R_I S^{\beta}_I T_I 
\end{align*}

It is easy to check that
\begin{align*}
I & = (P^{\alpha}_I \cup P^{\beta}_I) \cap D \\ 
\alpha & \in V(P_{\alpha}) \setminus V(P_{\beta}) \\
\beta & \in V(P_{\beta}) \setminus V(P_{\alpha})
\end{align*}

We need to show that $\Omega$ contains a peg choice for every peg interval in $D$. In other words, $\Omega$ must intersect the interior of each peg interval.
Suppose that is not the case, and let $I = D[\alpha, \beta]$ be a peg interval whose interior does not intersect $\Omega$. Without loss of generality we assume that this interval is depicted in Figure \ref{fig:bricksandpegintervals}.
\begin{enumerate}[Step 1:]

\item $\alpha$ and $\beta$ are consecutive nodes of $E$. \\
Since $P^{\alpha}_I$ is grounded and leads from $\Delta_E^{\text{out}}$ to $\diskin{E}$, there must be a node in the intersection of $\tr(E)$ and $\tr(P^{\alpha}_I)$.
By our assumption, this node cannot be internal to $I$, and since $N(E) \cap N(P^{\alpha}_I) \subseteq N(I) \setminus \{ \beta \}$, it follows that the node must be $\alpha$.
We repeat the same argument with $P^{\beta}_I$ to conclude that both $\alpha$ and $\beta$ are nodes of $E$.

If there is another node $\zeta$ of $E$ between $\alpha$ and $\beta$, then $\zeta$ is a node of $D$ (since $N(E) \subseteq N(D)$) and since the orders on $N(E)$ induced by the clockwise directions of $E$ and $D$ are identical,
$\zeta$ is between $\alpha$ and $\beta$ in $D$ as well. In other words, $\zeta \in N(E)$ is an internal node of the peg interval $I$, contrary to our assumption.
As consecutive nodes, $\alpha$ and $\beta$ are the ends of an $E$-factor $E[\alpha, \beta]$ that resides in some cell $c = h(E[\alpha, \beta])$.

\item If the cell $c$ has 3 nodes $\{ \alpha, \beta, \gamma \}$ then $\gamma \not \in V(I)$. \\
Suppose that $\gamma \in V(I)$.
As a node of $D$, $\gamma \in \tr(D)$ and since $\diskin{D} \subseteq \diskin{E}$ it follows that $\gamma \in \diskin{E}$.
Recall that $R_I$ is a path that connects $C$ to $m_{\alpha \beta}$.
Let $J$ be the sub-interval of $I$ connecting $m_{\alpha \beta}$ to $\gamma$ (so $J$ is either $D[m_{\alpha \beta}, \gamma]$ or $D[\gamma, m_{\alpha \beta}]$, depending on the order of $m_{\alpha \beta}$ and $\gamma$ in $D$).
The concatenation $R' = R_I \cdot J$ is a grounded path leading from $\diskout{E}$ to $\diskin{E}$.
As such, the track of $R'$ must intersect the track of $E$.
But neither $R_I$ nor $J$ contain a node of $E$. Recall that $V(R_I) \cap N(E)) \subseteq V(R_I) \cap V(W) = \{ m_{\alpha \beta} \} \subseteq V(J)$,
and $J$ is wholly contained in the interior of $I$ which contains no $E$ nodes, by assumption.

\item $I \cap \sigma(c) = \{ \alpha, \beta \}$ \\
We showed that $I$ does not contain nodes of $c$ except at its ends.
Therefore the interior of $I$ is either entirely inside $\sigma(c)$ or entirely outside.
Assume that $I \subseteq \sigma(c)$.
Then $I$ is a factor of $D$ and $\alpha$ and $\beta$ are consecutive nodes of $D$, and by our assumptions on $E$ we know that $I = D[\alpha, \beta] = E[\alpha, \beta]$.
Suppose that $c$ is a border cell of $E$. Since $\sigma(c)$ contains at least one internal vertex (because $m_{\alpha \beta} \in I$)
it follows from the maximality of $P$ that there is a vertex $p \in P \cap \sigma(c)$.
Since $E$ is proper, the third node $\gamma$ of $c$ is in the interior of $\diskout{E}$ and therefore not in $V(D)$ and therefore $D$ has a single factor in $c$.
It follows that $p \in P \cap V(I)$  contrary to our assumption that $\Omega$ does not contain a peg choice for $I$.
The cell $c$ cannot be exterior either because it contains a factor of $E$. Therefore $c \subset \diskin{E}$.

The path $R_I$ goes from $C$ in the exterior of $c$ to the vertex $m_{\alpha \beta} \in V(I) \subset V(\sigma(c))$.
Therefore $V(R_I)$ must contain a node $\gamma$ of $c$.
Since $c$ is interior to $E$, the grounded subpath of $R_I$ leading from $C$ to $\gamma$ must pass through a node of $E$.
Since $R_I$ is disjoint from $D$ except at $m_{\alpha \beta}$, it follows that this $E$ node must be $m_{\alpha \beta}$, contrary to our assumption that there are no $E$ nodes in the interior of $I$.

We conclude that the interior of $I$ is entirely outside $\sigma(c)$.

\item If $Q_D$ is a factor of $I$, then $h(Q_D) \subset \diskin{E}$. \\
Assume that $h(Q_D) \subset \diskout{E}$. Then it must be exterior to $D$  as well, and as the home of a factor of $D$ it must be a border cell of $D$, with $\tr(Q_D)$ separating the interior of $h(Q_D)$ from $\diskin{D}$.
Therefore $\tr(Q_D)$ separates $\Delta_E^{\text{out}}$ from $\diskin{E}$, and so $\tr(Q_D)$ is a segment of $\tr(E)$, and so there is a factor $Q_E$ of $E$ with $h(Q_E) = h(Q_D)$ and with $Q_D$ and $Q_E$ sharing the same ends.
Since the clockwise directions on $N(D)$ and $N(E)$ agree, $Q_E$ must be a subpath of $E[\alpha, \beta]$,
which implies that $Q_E = E[\alpha, \beta]$ and therefore $Q_D = D[\alpha, \beta] = I$ and so $h(I) = h(Q_D) = h(Q_E) = h(E[\alpha, \beta]) = c$,
contrary to our assumption that $I$ is disjoint from the interior of $c$.

\item Reach a contradiction and conclude that there is a peg choice for $I$ in $\Omega$. \\
Let $Q_D$ be a factor of $D[\alpha, \beta]$ that contains $m_{\alpha, \beta}$ as a vertex ($Q_D$ may not be unique because we cannot exclude the possibility that $m_{\alpha \beta}$ is a node.)
The path $R_I$ must intersect a node $n$ of $h(Q_D)$.
Since $h(Q_D) \subseteq \diskin{E}$, we have $n \in \diskin{E}$ and therefore the grounded subpath of $R_I$ from $C$ to $n$ must contain a node of $E$.
The only possible candidate for such a node is $m_{\alpha \beta}$, contrary to our assumption that there are no nodes of $E$ in the interior of $I$.
\end{enumerate}
\end{proof}

\subsubsection{Revisiting the proof of 5.2 (The Flat Wall Theorem)}

In the final part of the proof of 5.2, one obtains a rural society $(H_i, C)$ where $H_i$ contains a wall $W_i$ that contains the vertices of $C$ as corners, and a subwall $W \subset W_i$, with boundary $D$,
such that $W$ is far from the boundary of $W_i$.
The original proof of 5.2 is then proceeds by appealing to lemma 5.1, proving that $W$ is flat in $H_i$ (and ultimately in $G$), using the notion of flatness defined in $\cite{NewProof}$ .

To fix the proof, we use Lemma \ref{lem:Wiswallin5.1} instead. All we have to do is construct the paths $R_I$.
This is not hard to do, but requires slightly different constructions for peg intervals $I$ belonging to different types of border bricks $B$ of $W$.
The construction works because $W$ is contained entirely within the interior of $W_i$ and does not intersect its boundary.

If $B$ is a bulging right side brick or a top right corner brick of $W$ (either bulging or recessed), construct $R_{I}$
by first drawing a horizontal rightward path from the top right corner of $B$ (which is in the interior of $I$) to the first vertex $v$ on the boundary of $W_i$,
and then continue up the right boundary of $W_i$ to the top right corner of $W_i$ which is in $C$ by assumption.

The same construction holds for bulging left side bricks and bottom left corner brick of $W$ by rotating the picture 180 degrees,
it works for the bottom right corner brick by flipping the picture 180 degrees along the horizontal axis,
and for the top left corner brick by flipping the picture 180 degrees along the vertical axis.

The remaining case is when $B$ is a top (or bottom) brick of $W$.
Start $R_{I}$ at the upward $W_i$-edge emanating from the middle of $I$, and continue in a vertical, right-bulging square-wave pattern until you hit the boundary of $W_i$ at a vertex $v$,
and then continue right along the boundary of $W_i$ until you reach a corner, which is in $C$ by assumption.

It is not hard to check that these constructions give the desired paths and that the four corner brick paths are mutually disjoint as required.
Lemma \ref{lem:Wiswallin5.1} now implies that $W$ is flat in $H_i$.

\subsubsection{Revisiting Lemma 6.1 (Subwalls of flat walls are flat)}

Lemma 6.1 in \cite{NewProof} attempts to prove that a subwall of a flat wall $W$ is also flat, at least when the boundary of the subwall is disjoint from the boundary of $W$.
According to \cite{MoreAccurate}, this assertion is not true in full generality, and that paper proposes a way to add {\em certificates of flatness} to make the statement of 6.1 true after some necessary modifications.
With the new definition of flatness proposed here, we show that 6.1 is true in general, without any restrictions on the boundary of the subwall.

\begin{lem}
Let $W$ be a flat wall in a graph $G$, and let $W'$ be a subwall of $W$ of height at least 3. Then $W'$ is flat in $G$. 
\end{lem}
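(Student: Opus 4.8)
The plan is to reduce the statement to Lemma~\ref{lem:Wiswallin5.1}. Fix a separation $(A,B)$ of $G$ and a set $\Omega\subseteq A\cap B$ witnessing that $W$ is flat, with $D$ the boundary of $W$; so $V(W)\subseteq V(B)$, $A\cap B\subseteq V(D)$, $\Omega$ meets the interior of every peg interval of $W$, and $(G[B],\Omega)$ is rural. Since $V(W)\subseteq V(B)$ we have $W'\subseteq W\subseteq G[B]$. Write $D'$ for the boundary of $W'$. I would apply Lemma~\ref{lem:Wiswallin5.1} to the rural society $(G[B],\Omega)$, the wall $W'$, and its boundary $D'$; its conclusion provides a separation $(A_1,B_1)$ of $G[B]$ and a circular order $\Omega_1$ with $V(W')\subseteq V(B_1)$, $\Omega_1\subseteq A_1\cap B_1\subseteq V(D')$, $\Omega_1$ meeting the interior of every peg interval of $W'$, and $(G[B_1],\Omega_1)$ rural. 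Composing the two separations --- set $B':=B_1$ and $A':=A\cup A_1$ --- gives a separation of $G$, and using $V(D)\cap V(W')\subseteq V(D')$ (no interior vertex of $W'$ lies on $D$) one verifies $A'\cap B'\subseteq V(D')$; then $(A',B',\Omega_1)$ witnesses that $W'$ is flat in $G$.

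\textbf{Building the paths.}
To apply Lemma~\ref{lem:Wiswallin5.1} I must supply, for every peg interval $I$ of $D'$, a simple path $R_I$ from $\Omega$ to an interior peg of $I$ that meets $V(W')$ only at that peg, and the four such paths for the peg intervals of the corner bricks of $W'$ must be pairwise vertex-disjoint. The ambient wall $W$ provides them. If $I\subseteq D$, then the ends of $I$ have degree $3$ in $W'$, hence in $W$, while the interior vertices of $I$ have degree $2$ in $W$ (the boundary of a wall is an induced cycle, and there is nothing of $W$ beyond $D$ on the outer side of $I$), so $I$ is itself a peg interval of $D$; then $\Omega$ already contains an interior vertex of $I$ and I take $R_I$ to be the one-vertex path there. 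If $I\not\subseteq D$, then $I$ has an interior peg $m$ with an edge of $W$ at $m$ that does not lie in $W'$: either $m$ is a branch vertex of the elementary wall lying in the interior of $W$, so $\deg_W(m)=3$ with the third edge leaving $V(W')$, or $m$ lies on $D$ next to where $D'$ parts from $D$, so one of the two $D$-edges at $m$ avoids $W'$. Starting at such an $m$ I route $R_I$ outward through $W\setminus V(W')$ until it meets $D$ and then along $D$, through a peg interval of $D$ that is disjoint from $V(W')$, to a vertex of $\Omega$ --- possible because $\Omega$ meets every peg interval of $D$ and $W'$, being a proper subwall, leaves an arc of $D$ free of its vertices. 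For the four corner bricks I aim the four routes at the four distinct corners of $W$, which lie in four distinct peg intervals of $D$, so that the portions inside $W\setminus V(W')$ stay near different corners and the four paths are vertex-disjoint; this parallels the explicit construction in the treatment of Theorem~5.2 above, but without the simplifying assumption that $W'$ avoids $D$. In particular $\vert\Omega\vert\ge 4$, so every hypothesis of Lemma~\ref{lem:Wiswallin5.1} is satisfied.

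\textbf{The obstacle.}
The hard part --- and exactly what made the original Lemma~6.1 fail under the stronger notion of flatness --- is the case where $D'$ overlaps $D$. One must check, brick type by brick type along the border of $W'$, that a routable interior peg always exists (namely the peg closest to where $D'$ leaves $D$), that $W\setminus V(W')$ --- which can be disconnected when $W'$ spans a full row or column of $W$ --- still lets each $R_I$ reach $\Omega$, and, most delicately, that the four corner routes can be kept simultaneously disjoint no matter how the corners of $W'$ sit relative to $D$ (a corner of $W'$ may coincide with a corner of $W$, may lie inside a side of $D$, or may be interior to $W$). The case analysis is of the same flavour as the one in the proof of Lemma~\ref{lem:Wiswallin5.1}. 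Once the paths are in hand, the remainder is routine: Lemma~\ref{lem:Wiswallin5.1} yields that $W'$ is flat in $G[B]$, and the composition of separations above carries this back to $G$.
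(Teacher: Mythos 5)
Your proposal takes essentially the same route as the paper: pass to the rural society $(G[B],\Omega)$ witnessing the flatness of $W$, build paths $R_I$ from $\Omega$ into the peg intervals of $W'$, invoke Lemma~\ref{lem:Wiswallin5.1}, and compose the two separations (your $(A\cup A_1,B_1)$ is the paper's $(A\cup A',B\cap B')$ since $B'\subseteq B$). The paper's proof carries out exactly this plan --- phrased as ``apply Lemma~5.1' then invoke Lemma~\ref{lem:Wiswallin5.1} for the peg condition'' --- and supplies the brick-by-brick construction of the $R_I$ (the one-vertex path when a peg of $W'$ already lies in $\Omega$, otherwise an outward route through $W\setminus V(W')$ to a peg interval of $W$, via a horizontal path for side and corner bricks or a square-wave for interior top/bottom bricks) that you flag as the remaining case analysis; that analysis does go through as you anticipate.
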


\begin{proof}
Let $D$ and $D'$ be the boundaries of $W$ and $W'$, respectively.
$W$ being flat means that there is a separation $(A, B)$ of $G$ and a vertex set $\Omega \subseteq A \cap B$ such that
\begin{itemize}
\item $V(W) \subseteq V(B)$
\item $A \cap B \subseteq V(D)$
\item $\Omega$ contains an internal vertex from each peg interval of $W$.
\item Endow $\Omega$ with a circular order induced from $D$. Then $(G[B], \Omega)$ is a rural society.
\end{itemize}

Apply lemma 5.1' to the society $(G[B], \Omega)$, the subgraph $W'$ and its cycle $D'$. Finding the required paths $P_1, \dots, P_4$ is trivial.
Then there is a $\Omega$-rendition $\rho$ of $G[B]$ and a $\rho$-proper cycle $E$ in $G[B]$ such that
\begin{itemize}
\item $N(E) \subseteq N(D')$
\item $\diskin{E} \supseteq \diskin{D'}$
\item The sets
\begin{itemize}
\item $A' = N(\diskout{E}) \cup \bigcup_{c \subseteq \Delta_E^{\text{out}}} \sigma(c)$
\item $B' = (V(D') \cap V(E)) \cup \bigcup_{c \subseteq \diskin{E}} \sigma(c)$
\end{itemize}
 form a separation of $G[B]$ such that $V(W') \subseteq V(B')$ and for any choice $P$ of internal vertices of shared factors of $D'$ and $E$,
 $N(E) \cup P \subseteq A' \cap B' \subseteq V(D')$ and $((G[B])[B'], N(E) \cup P)$ is rural.
 \end{itemize}

It is easy to see that $(G[B])[B'] = G[B']$ and so $(G[B'], N(E) \cup P)$ is rural.
Look at the separation $(\bar{A}, \bar{B}) = (A \cup A', B \cap B')$. It is obvious that $(\bar{A}, \bar{B})$ is a separation of $G$. We will show that together with $\Omega' = N(E) \cup P$ it provides evidence for the flatness of $W'$.

\begin{enumerate}[Step 1:]
\item $V(W') \subseteq V(\bar{B})$ \\
This is easy since we already know that $V(W') \subseteq V(W) \subseteq V(B)$ and $V(W') \subseteq V(B')$.
\item Choose $P$ to be maximal. Then $\Omega'$ intersects the interior of each peg interval of $W'$. \\

This will follow directly from Lemma \ref{lem:Wiswallin5.1}, once we show that all the required paths $R_I$ from $\Omega$ to the peg intervals of $W'$ exist.
This is not hard to do. Here is one recipe:

Let $I'$ be a peg interval of $W'$ along a border brick $B'$ of $W'$.
If $m \in V(I') \cap \Omega$ is an interior vertex of $I'$ then define $R_{I'} = \{ m \}$. Otherwise assume that $\Omega$ does not intersect the interior of $I'$.
This also implies that $B'$ is not a border brick of $W$.

If $B'$ is a bulging right side brick or a top right corner brick of $W'$, construct $R_{I'}$
by drawing a horizontal rightward path from the top right corner of $B'$ (which is in the interior of $I'$) to the first vertex $v$ on the boundary of $W$.
The vertex $v$ is a member of a unique $W$-brick $B$ which is either a bulging right side brick or a bulging top right corner brick of $W$, and is an end vertex of its $W$-peg interval $I$.
By assumption $\Omega$ contains an interior point $p$ of $I$. Continue the path from $v$ to $p$ along $I$, thus completing $R_{I'}$.
The same construction holds when "right" is replaced by "left" or "top" is replaced by "bottom".

The remaining case is when $B'$ is a top (or bottom) brick of $W'$.
By our assumption on $V(I') \cap \Omega$ we can assume that $B'$ is  an interior brick of $W$.
Start $R_{I'}$ at the upward $W$-edge emanating from the middle of $I'$, and continue in a vertical, right-bulging square-wave pattern until you hit the boundary of $W$ at a vertex $v$.
The vertex $v$ is a member of exactly two top bricks of $W$. Let $B$ be the left brick and let $I$ be its peg interval. Then $v$ is an end vertex of $I$.
By assumption $\Omega$ contains a vertex $p$ in the interior of $I$. Continue the path from $v$ to $p$ along $I$ to complete $R_{I'}$.

It is not hard to check that these constructions give the desired paths and that the four corner brick paths are mutually disjoint as required.
\end{enumerate}
\end{proof}

\appendix

\section{A Counterexample to the Flat Wall Theorem (5.2 in \cite{NewProof})}\label{CounterExample}

The Flat Wall Theorem as stated in \cite{NewProof} says:

\begin{thm52}
Let $r, t \ge 1$ be integers, let $r$ be even, let $R = 49152 t^{24} (40t^2 + r)$, let $G$ be a graph, and let $W$ be an $R$-wall in $G$. Then either $G$ has a model of a $K_t$ minor grasped by $W$,
or there exist a set $A \subseteq V(G)$ of size at most $12288t^{24}$ and an $r$-subwall $W'$ of $W$ such that $V(W') \cap A = \emptyset$ and $W'$ is a flat wall in $G \setminus A$.
\end{thm52}

This theorem fails because of a definition of flatness that is too strict\footnote{Robertson and Seymour use a much looser definition.}. Flatness is defined in \cite{NewProof} as follows:

\begin{defnFlat}
Let $G$ be a graph, and let $W$ be a wall in $G$ with an outer cycle $D$.
Let us assume that there exists a separation $(A, B)$ such that $A \cap B \subseteq V(D)$, $V(W) \subseteq B$, and there is a choice of pegs of $W$ such that every peg belongs to $A$.
If some $A \cap B$-reduction of $G[B]$ can be drawn in a disk with the vertices of $A \cap B$ drawn on the boundary of the disk in the order determined by $D$, then we say that the wall $W$ is {\em flat} in $G$.
\end{defnFlat}

The {\em choice of pegs} requirement in the definition simply means that for every top or bottom brick in $W$, at least one degree-2 vertex of $D$ along that brick must be in $A$;
for every right or left brick at least two degree-2 vertices of $D$ along the brick must be in $A$; two such vertices are in $A$ for recessed corner bricks; and finally three such vertices are in $A$ for bulging corner bricks.
See Figure \ref{fig:bricksandpegintervals} for reference.

In light of Lemmas 1.3 and 1.4 in \cite{NewProof}, the definition can be rephrased as follows (compare with Definition \ref{def:flatwall}).

\begin{defnFlat}
  Let $G$ be a graph and $W \subset G$ a wall with boundary $D$. We say that $W$ is {\em flat} in $G$ if there is a separation $(A, B)$ of $G$ such that
  \begin{enumerate}
  \item $V(W) \subset V(B)$
  \item $A \cap B \subset V(D)$
  \item $A \cap B$ contains a choice of pegs of $W$.
  \item Endow $A \cap B$ with a circular order induced from $D$. Then $(G[B], A \cap B)$ is a rural society.
  \end{enumerate}
\end{defnFlat}

The root cause of the failure of Theorem 5.2 is that $G[B]$ can include arbitrary arrangements of edges between vertices of $D$ that can prevent the society $(G[B], A \cap B)$ from being rural as required.

The counterexample to 5.2 is essentially a very large wall with some additional vertices and edges added to each brick, including a pair of crossing edges with ends along the horizontal bottom edge of the brick.
We call these bricks {\em full bricks} (see Figure \ref{fig:CeFullBrick}). We will refer to a graph built by layering full bricks in an $R$-wall-like configuration as an {\em $R$-counterwall}.
We must take care when we layer these bricks - when a brick $A$ is layered over the top left of brick $B$, their shared horizontal path is determined by $A$ and not by $B$.
If $A$ is layered on over the top right of $B$, then the shared horizontal path is just a single edge.

Write $G_R$ for an $R$-counterwall built out of full bricks.
The {\em wall of $G_R$} is the wall obtained from $G_R$ by removing all the diagonal edges of type $\omega \alpha$, $\omega \beta$, $\omega \gamma$ and $\omega \delta$ and all the curved edges of type $\alpha \gamma$ and $\beta \delta$.
A {\em sub-counterwall} of $G_R$ is a union of full bricks of $G_R$ that forms a counterwall.

\begin{figure}[htb]
\centering
\begin{subfigure}{.32\linewidth}
    \centering
    \scalebox{0.42} {
    \begin{tikzpicture}
    \draw (0, 0) -- (10, 0);
    \draw (0, 5) -- (10, 5);
    \draw (0, 0) -- (0, 5);
    \draw (10, 0) -- (10, 5);
  
    \draw (10, 5) -- (6, 0);
    \draw (10, 5) -- (7, 0);
    \draw (10, 5) -- (8, 0);
    \draw (10, 5) -- (9, 0);
  
    \draw (6, 0) to[out = -80, in = -100] (8, 0);
    \draw (7, 0) to[out = -80, in = -100] (9, 0);
  
    \filldraw [black] (0, 0) circle (3pt);
    \filldraw [black] (5, 0) circle (3pt);
    \filldraw [black] (6, 0) circle (3pt);
    \node at (6, 0.4) {$\alpha$};
    \filldraw [black] (7, 0) circle (3pt);
    \node at (7, 0.4) {$\beta$};
    \filldraw [black] (8, 0) circle (3pt);
    \node at (8, 0.4) {$\gamma$};
    \filldraw [black] (9, 0) circle (3pt);
    \node at (9, 0.4) {$\delta$};
    \filldraw [black] (10, 0) circle (3pt);
    \filldraw [black] (0, 5) circle (3pt);
    \filldraw [black] (5, 5) circle (3pt);
    \filldraw [black] (10, 5) circle (3pt);
    \node at (10, 5.4) {$\omega$};

    \end{tikzpicture}
  }
  \caption{Full brick}
  \label{fig:CeFullBrick}
  \end{subfigure}
  \hfill
  \begin{subfigure}{.32\linewidth}
    \centering
    \scalebox{0.42} {
    \begin{tikzpicture}
    \draw (0, 0) -- (10, 0);
    \draw (0, 5) -- (10, 5);
    \draw (0, 0) -- (0, 5);
    \draw (10, 0) -- (10, 5);
  
    \draw (10, 5) -- (6, 0);
    \draw (10, 5) -- (8, 0);
    \draw (10, 5) -- (9, 0);
  
    \draw (6, 0) to[out = 55, in = 125] (8, 0);
    \draw (7, 0) to[out = -80, in = -100] (9, 0);
  
    \filldraw [black] (0, 0) circle (3pt);
    \filldraw [black] (5, 0) circle (3pt);
    \filldraw [black] (6, 0) circle (3pt);
    \node at (6, 0.4) {$\alpha$};
    \filldraw [black] (7, 0) circle (3pt);
    \node at (7, 0.4) {$\beta$};
    \filldraw [black] (8, 0) circle (3pt);
    \node at (8, 0.4) {$\gamma$};
    \filldraw [black] (9, 0) circle (3pt);
    \node at (9, 0.4) {$\delta$};
    \filldraw [black] (10, 0) circle (3pt);
    \filldraw [black] (0, 5) circle (3pt);
    \filldraw [black] (5, 5) circle (3pt);
    \filldraw [black] (10, 5) circle (3pt);
    \node at (10, 5.4) {$\omega$};

    \end{tikzpicture}
  }
  \caption{Type I reduced brick}
  \end{subfigure}
  \hfill
  \begin{subfigure}{.32\linewidth}
    \centering
    \scalebox{0.42} {
    \begin{tikzpicture}
    \draw (0, 0) -- (10, 0);
    \draw (0, 5) -- (10, 5);
    \draw (0, 0) -- (0, 5);
    \draw (10, 0) -- (10, 5);
  
    \draw (10, 5) -- (6, 0);
    \draw (10, 5) -- (7, 0);
    \draw (10, 5) -- (9, 0);
  
    \draw (6, 0) to[out = -80, in = -100] (8, 0);
    \draw (7, 0) to[out = 55, in = 125] (9, 0);
  
    \filldraw [black] (0, 0) circle (3pt);
    \filldraw [black] (5, 0) circle (3pt);
    \filldraw [black] (6, 0) circle (3pt);
    \node at (6, 0.4) {$\alpha$};
    \filldraw [black] (7, 0) circle (3pt);
    \node at (7, 0.4) {$\beta$};
    \filldraw [black] (8, 0) circle (3pt);
    \node at (8, 0.4) {$\gamma$};
    \filldraw [black] (9, 0) circle (3pt);
    \node at (9, 0.4) {$\delta$};
    \filldraw [black] (10, 0) circle (3pt);
    \filldraw [black] (0, 5) circle (3pt);
    \filldraw [black] (5, 5) circle (3pt);
    \filldraw [black] (10, 5) circle (3pt);
    \node at (10, 5.4) {$\omega$};

    \end{tikzpicture}
  }
  \caption{Type II reduced brick}
  \end{subfigure}
  \caption{A full brick and two types of reduced bricks}
  \label{fig:CeBricks}
\end{figure}
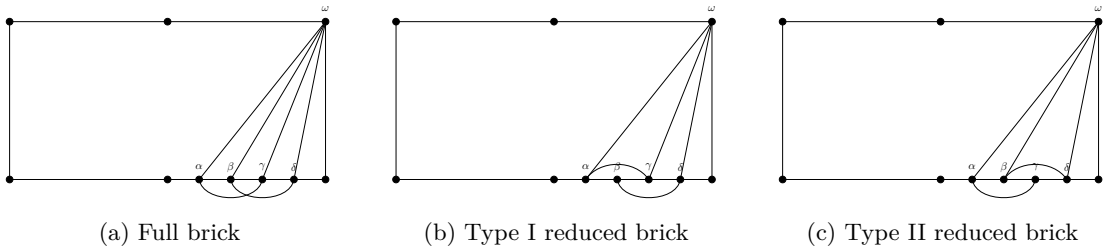

\begin{claim}
The graph $G_R$ does not possess a model of a $K_6$ minor.
\end{claim}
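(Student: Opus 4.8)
The plan is to argue by contradiction and reduce to the planar case: if $G_R$ had a $K_6$ minor we would exhibit a $K_6$ minor — hence a $K_5$ minor — in a planar graph, contradicting Wagner's theorem. First I would record the near‑planar structure of $G_R$. Let $P$ be $G_R$ with the two curved edges $\alpha_i\gamma_i$ and $\beta_i\delta_i$ of every full brick $i$ deleted, so that $P$ is the wall of $G_R$ together with all the diagonal fans $\omega_i\alpha_i,\omega_i\beta_i,\omega_i\gamma_i,\omega_i\delta_i$. Then $P$ is planar: embed the wall and draw each fan inside its own brick. In this embedding $\alpha_i,\beta_i,\gamma_i,\delta_i$ lie on a single face $f_i$ of $P$ (the face just below the bottom edge of brick $i$) in this cyclic order, and — because a counterwall is layered so that distinct bricks sit over distinct lower bricks — the faces $f_i$ are pairwise distinct. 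Hence adding a single chord $\alpha_i\gamma_i$ or $\beta_i\delta_i$ into $f_i$ keeps the graph planar, while adding both creates exactly one crossing; thus $G_R=P\cup\{\alpha_i\gamma_i,\beta_i\delta_i:i\}$ with $P$ planar and all crossings confined to pairwise disjoint faces. I would also record the local picture of one gadget: the subgraph of $G_R$ induced on $\{\omega_i,\alpha_i,\beta_i,\gamma_i,\delta_i\}$ is $K_5$ minus the edge $\alpha_i\delta_i$, with $\{\beta_i,\gamma_i,\omega_i\}$ a triangle each of whose vertices is adjacent to both $\alpha_i$ and $\delta_i$; moreover $\alpha_i\beta_i$, $\beta_i\gamma_i$, $\gamma_i\delta_i$ are wall edges, and $\alpha_i,\delta_i$ each have exactly one further wall‑neighbour, lying outside brick $i$.

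Next, suppose $K_6\preceq G_R$ and fix a minimal model: six pairwise vertex‑disjoint trees $T_1,\dots,T_6$ together with exactly one $G_R$‑edge between each pair, every leaf of every $T_j$ being an endpoint of one of these fifteen inter‑tree edges; let $\mathcal S\subseteq G_R$ be the resulting subgraph. Among all such models choose one for which the number of curved edges appearing in $\mathcal S$ is smallest. If for every brick $i$ at most one of $\alpha_i\gamma_i,\beta_i\delta_i$ lies in $\mathcal S$, then $\mathcal S$ is contained in $P$ together with at most one chord in each of the distinct faces $f_i$, which is planar; but $\mathcal S$ carries a $K_6$ minor and hence a $K_5$ minor, a contradiction. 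Therefore some brick $i_0$ contributes both of its curved edges to $\mathcal S$, and it remains to contradict minimality by rerouting $\mathcal S$ inside the gadget of $i_0$ to use fewer curved edges.

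This rerouting is carried out by a case analysis according to how the branch sets meet $\{\omega_{i_0},\alpha_{i_0},\beta_{i_0},\gamma_{i_0},\delta_{i_0}\}$. If both curved edges lie inside one tree $T_j$, replace $\beta_{i_0}\delta_{i_0}$ by the wall path $\beta_{i_0}\gamma_{i_0}\delta_{i_0}$: since $\gamma_{i_0}\in V(T_j)$ this keeps $T_j$ connected on the same vertex set and removes a curved edge. If at least one curved edge is an inter‑tree edge, one exploits the universal triangle $\{\beta_{i_0},\gamma_{i_0},\omega_{i_0}\}$: whenever $\omega_{i_0}$ is unused or lies in the branch set of $\alpha_{i_0}$ or of $\gamma_{i_0}$, one routes the corresponding adjacency through a diagonal at $\omega_{i_0}$ (adding $\omega_{i_0}$ to the branch set of $\alpha_{i_0}$ when needed, which is legitimate since $\omega_{i_0}\sim\alpha_{i_0}$), and symmetrically for $\beta_{i_0},\delta_{i_0}$; similarly, when a common neighbour ($\beta_{i_0}$ or $\gamma_{i_0}$) can be absorbed into the branch set of $\alpha_{i_0}$ or $\delta_{i_0}$, the corresponding wall edge replaces a curved one. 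In each of these situations a curved edge disappears while only wall or diagonal edges are added, contradicting minimality.

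The main obstacle is the last sub‑case: five distinct branch sets meeting the gadget — one containing each of $\alpha_{i_0},\beta_{i_0},\gamma_{i_0},\delta_{i_0},\omega_{i_0}$ — with the two curved edges serving as the unique inter‑branch‑set edges for two disjoint pairs. Here purely local moves seem insufficient; one must use that, since $\alpha_{i_0}\delta_{i_0}$ is not an edge, the branch sets containing $\alpha_{i_0}$ and $\delta_{i_0}$ connect outside brick $i_0$, forcing a path from $\alpha_{i_0}$ to $\delta_{i_0}$ that avoids $\beta_{i_0},\gamma_{i_0},\omega_{i_0}$, and then to combine this global detour with the planarity of $P$ to liberate a curved edge. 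Making this precise — bringing in the planar structure of the wall rather than relying on local surgery — is the heart of the argument; the remainder is routine bookkeeping with minimality and Wagner's theorem.
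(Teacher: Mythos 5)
Your strategy differs from the paper's. The paper introduces \emph{reduced bricks} (full bricks with one of the fan edges $\omega\beta$, $\omega\gamma$ deleted, which can be drawn planarly inside the brick) and performs an induction on the number of full bricks in a mixed full/reduced counterwall; the base case with no full bricks is planar, and the inductive step shows that in a full brick, one of $\omega\beta,\omega\gamma$ can always be made unused so that the brick may be replaced by a reduced one. You instead minimize the number of curved edges $\alpha_i\gamma_i,\beta_i\delta_i$ appearing in the model and appeal to planarity of $P$ with at most one chord per face $f_i$. Both are legitimate measures, but your rerouting argument has an acknowledged hole in the ``five distinct branch sets'' sub-case, which you flag as ``the heart of the argument'' without resolving it.

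The crucial point you are missing is that this ``hard case'' is vacuous, and the reason is the opening observation of the paper's proof: a branch set of a $K_6$ model can be a singleton $\{v\}$ only if $\deg(v)\ge 5$, yet each of $\alpha_{i_0},\beta_{i_0},\gamma_{i_0},\delta_{i_0}$ has degree exactly $4$ in $G_R$ (its two wall neighbours, its one diagonal to $\omega_{i_0}$, and its one curved edge). If $\beta_{i_0}$ lay in a branch set meeting none of $\alpha_{i_0},\gamma_{i_0},\omega_{i_0},\delta_{i_0}$, that branch set would have to be $\{\beta_{i_0}\}$ --- impossible. The paper exploits this degree count more aggressively: when $\beta$ shares a branch set with none of $\alpha,\gamma,\omega$, it must share one with $\delta$, at which point the model edge $\omega\beta$ can be swapped for $\omega\delta$, freeing $\omega\beta$ entirely and enabling the reduction. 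Your minimization on curved edges could presumably be repaired with the same degree observation, but as written the proposal does not close the argument; and the one idea that closes it --- the degree-$4$ constraint on $\alpha,\beta,\gamma,\delta$ --- is exactly what you would need to add.
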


\begin{proof}
To prove the claim we introduce {\em reduced bricks} (see Figure \ref{fig:CeBricks}), and we consider {\em mixed counterwalls} built with a mix of full and reduced bricks. The layering rules for full bricks apply to reduced bricks as well.
We prove that mixed counterwalls do not possess a model of a $K_6$ minor. The proof proceeds by induction on the number of full bricks in the mixed counterwall.

The base case is easy. If a mixed counterwall $G$ does not contain any full bricks, then one can check that $G$ is planar by inspecting the two types of reduced bricks.
For the other cases, suppose that there is a model of a $K_6$ minor in $G$. Denote its six branch sets by $B_1, \dots, B_6$ and assume that all of them are trees. Denote its fifteen model edges by $e_{12}, e_{13}, \dots, e_{56}$.

Notice that a branch set can be a singleton $B_i = \{ v \}$ only if $v$ is a vertex of degree 5 at least. This excludes the degree 4 vertices of type $\alpha$, $\beta$, $\gamma$ and $\delta$.
The induction proceeds by choosing a full brick in $G$ and considering the following cases.
\begin{enumerate}[C{a}se 1:]
\item\label{case:UnusedEdge} One of the two edges $\omega \beta$ and $\omega \gamma$ is not used in the model - neither in some $B_i$ nor as a model edge. \\
In this case the full brick can be replaced with a reduced brick of type I or II, respectively. The $K_6$ model lifts trivially to the new mixed counterwall, contrary to the induction hypothesis.

\item\label{case:Consecutive} Two neighboring vertices $v$, $v'$ among $\alpha$, $\beta$, $\gamma$ and $\delta$ belong to the same branch set $B_i$. \\
At least one of $v$ and $v'$ is in $\{ \beta, \gamma \}$. Let's say it is $v$. Now examine the role of the vertex $\omega$.
If $\omega$ is not used in the model then we are obviously in Case \ref{case:UnusedEdge} since $\omega v$ is not used in the model either.
if $\omega$ belongs to $B_j$ with $i \ne j$, then at most one of $\omega v$ and $\omega v'$ plays the role of $e_{i j}$.
If it happens to be $\omega v$, we can modify the model by replacing $\omega v$ with $\omega v'$ in that role, and we are back to Case \ref{case:UnusedEdge}, since $\omega v$ is no longer used in the model.

So we may assume that $\omega$ belongs to $B_i$ as well. Look at the edge $\omega v$.
If $\omega v$ is not used in $B_i$ then we are back to Case \ref{case:UnusedEdge}, so assume that $\omega v \in V(B_i)$.
Removing $\omega v$ creates a disjoint union of trees $B_i \setminus \omega v = B_i^{\omega} \sqcup B_i^v$ with $\omega \in V(B_i^{\omega})$ and $v \in V(B_i^v)$.
We can go back to Case \ref{case:UnusedEdge} by removing $\omega v$ from $B_i$ and replacing it with $v v'$ (if $v' \in V(B_i^{\omega})$) or with $\omega v'$ (if $v' \in V(B_i^v)$).

\item None of the above. \\
We can assume that both $\omega \beta$ and $\omega \gamma$ are used in the model; that $\beta$ and $\gamma$ belong to two different branch sets $B_i$ and $B_j$; and that $\omega$ belongs to some branch set $B_k$.
Without loss of generality we can assume $i \ne k$. It follows that $\beta$ does not share a branch set with either $\alpha$, $\gamma$ or $\omega$.
Since $B_i$ cannot be a singleton, it must be the case that $\delta$ is a vertex of $B_i$.

Since $\omega \beta$ is used in the model by assumption, we have $e_{i k} = \omega \beta$. We can change the model by replacing $\omega \beta$ with $\omega \delta$ in the role of  $e_{i k}$, and we are back in Case \ref{case:UnusedEdge}.
\end{enumerate}
\end{proof}

\begin{claim}
Let $R > r > 5$ be integers. Let $G$ be an $R$-counterwall, $G' \subset G$ an $r$-sub-counterwall of $G$ that is disjoint from the top horizontal path of $G$, and $X \subseteq V(G)$ a vertex set such that $G'$ is $X$-free ($V(G') \cap X = \emptyset$.)
Assume that there are three $X$-free, horizontally consecutive bricks in $G$ layered completely on top of $G'$ (their bottom paths are subpaths of the boundary of $G'$.) 
Then the wall of $G'$ is not flat in $G \setminus X$ according to the definition of flatness in \cite{NewProof}.
\end{claim}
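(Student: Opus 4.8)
The plan is to assume, for contradiction, that the wall $W$ of $G'$ is flat in $G\setminus X$ in the sense of \cite{NewProof}: there are a separation $(A,B)$ of $G\setminus X$ and a set $\Omega=A\cap B$ with $V(W)\subseteq V(B)$, $\Omega\subseteq V(D)$ (where $D$ is the boundary cycle of $W$), $\Omega$ containing a choice of pegs of $W$, and $((G\setminus X)[B],\Omega)$ rural in the circular order induced by $D$. Everything is then extracted from one of the three $X$-free full bricks layered completely on top of $G'$ --- call it $B^{\ast}$, with base vertices $\alpha,\beta,\gamma,\delta$ occurring in this order along its bottom path and apex $\omega$ --- the other two bricks being slack. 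Since $B^{\ast}$ sits completely on top of $G'$, its bottom path is a subpath of $D$, so $\alpha,\beta,\gamma,\delta\in V(D)\subseteq V(B)$; hence the crossing edges $\alpha\gamma,\beta\delta$ of $B^{\ast}$ and the subpath $\alpha\beta\gamma\delta$ of $D$ all lie in $(G\setminus X)[B]$, while $\omega\notin V(W)$. I would split on whether all of $\alpha,\beta,\gamma,\delta$ lie in $\Omega$.

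Suppose first that $\{\alpha,\beta,\gamma,\delta\}\subseteq\Omega$. In any rendition of $((G\setminus X)[B],\Omega)$ in a disk $\Delta$ these four vertices are nodes on $\bd(\Delta)$ in the cyclic order $\alpha,\beta,\gamma,\delta$. The edge $\alpha\gamma$ lies in the flap of a cell $c_{1}$ with $\alpha,\gamma\in\tilde{c_{1}}$ and $\beta\delta$ in the flap of a cell $c_{2}$ with $\beta,\delta\in\tilde{c_{2}}$, and $c_{1}\neq c_{2}$ since an internal cell has at most three nodes and $\sigma(\star)$ is edgeless. But $\bar{c_{1}}$ meets $\bd(\Delta)$ in a finite set of nodes including $\alpha,\gamma$, so $\Delta\setminus c_{1}$ separates into components with $\beta$ and $\delta$ in the closures of different ones; since $c_{2}$ is connected and almost disjoint from $c_{1}$ it lies in the closure of a single component, contradicting $\beta,\delta\in V(\sigma(c_{2}))$. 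So this case is impossible.

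Now suppose some $v\in\{\alpha,\beta,\gamma,\delta\}$ lies outside $\Omega$. Then $v\in V(B)\setminus A$, so the diagonal $\omega v$ --- an edge of $G\setminus X$ because $B^{\ast}$ is $X$-free --- cannot lie in $(G\setminus X)[A]$; hence it lies in $(G\setminus X)[B]$ and $\omega\in V(B)$, so all four diagonals $\omega\alpha,\omega\beta,\omega\gamma,\omega\delta$ belong to $(G\setminus X)[B]$. Fix a tight rendition $\rho$ of $((G\setminus X)[B],\Omega)$ in a disk $\Delta$, so that Lemma \ref{TightRenditionProps} is available. Since $|\Omega|\ge 4$ (it meets every peg interval of $W$, of which there are at least four) while a necessarily internal cell carrying all of $D$ would have at most three nodes and $\Omega\subseteq V(D)$, the cycle $D$ cannot be confined to a single cell; thus $D$ is grounded, $\tr(D)$ is a loop dividing $\Delta$ into $\diskin{D}$ and $\diskout{D}$, and the connected graph $W\setminus V(D)$ (connected since $W$ has height at least $3$) lies in the cells on one side of $\tr(D)$. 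The key point is that the cells carrying $\alpha\gamma$ and $\beta\delta$ cannot lie on the same side of $\tr(D)$: the ends of the two edges interleave along $\tr(D)$, and via Lemma \ref{lem:twononcrossingloops} a cell carrying one of them that lies on a given side cuts that side into two parts separating $\beta$ from $\delta$, so the cell carrying the other edge --- connected and almost disjoint from the first --- cannot contain both of its ends. Therefore one of the two crossing edges, say $\alpha\gamma$ (the other case being symmetric), lies in a cell $c^{\ast}$ on whichever side of $\tr(D)$ contains $\omega$ (note $\omega\notin V(D)$ lies strictly on one side, and each diagonal $\omega x$ occupies a cell having $\omega$ as a vertex, hence on $\omega$'s side). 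Now $c^{\ast}$ splits $\omega$'s side into two parts, exactly one bounded by the arc of $\tr(D)$ through $\beta$; a short case check --- on which arc of $\tr(D)$ bounds the $c^{\ast}$-part and on which part contains $\omega$ --- shows that some diagonal $\omega\beta$ or $\omega\gamma$ is forced to have its cell straddle $c^{\ast}$, which is impossible for distinct edges of a rendition. This contradiction shows $((G\setminus X)[B],\Omega)$ is not rural, contrary to flatness.

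The parts I expect to be routine are the two cell-separation arguments and the facts $\omega\notin V(W)$, $\alpha,\beta,\gamma,\delta\in V(D)$, $|\Omega|\ge 4$, all of which follow from the definitions and Section~2.4 together with Lemmas \ref{lem:twononcrossingloops} and \ref{TightRenditionProps}. The main obstacle is the bookkeeping in the last paragraph: pinning down where the chords, the diagonals and the base vertices sit relative to $\tr(D)$ in an arbitrary tight rendition when some of $\alpha,\beta,\gamma,\delta$ or $\omega$ fail to be nodes. This is exactly where tightness, the concrete layered structure of the counterwall, and the hypothesis of three consecutive $X$-free bricks on top of $G'$ (which guarantees at least one full brick whose base is an undisturbed subpath of $D$ meeting no corner of $W$) are meant to reduce the situation to the clean configuration treated above.
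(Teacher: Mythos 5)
Your proof diverges from the paper's most at exactly the point you flag as a remaining obstacle, and that obstacle is a genuine gap rather than a matter of bookkeeping. The paper's argument is organized around the top horizontal path $T$ of the three $X$-free bricks: since $T$ is connected and disjoint from $V(D')\supseteq A\cap B$, the separation forces $V(T)$ entirely into $A\setminus B$ or entirely into $B\setminus A$. In the first case (which corresponds to your Case 1) the diagonal edges $\omega\alpha,\dots,\omega\delta$ of a single brick push $\alpha,\beta,\gamma,\delta$ into $A\cap B$ and the cross $\alpha\gamma,\beta\delta$ is immediate. In the second case the paper exhibits a cross between pegs of \emph{different} bricks, built from the vertical brick edges and the path $T$ (Figure~\ref{fig:CeBCross}); this is where the hypothesis of three horizontally consecutive $X$-free bricks is actually used, and the cross lies between genuine $\Omega$-vertices by construction.

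Your Case 2 instead tries to extract a contradiction from a single brick $B^{\ast}$ via the topology of a tight rendition, and this cannot work. Once some $v\in\{\alpha,\beta,\gamma,\delta\}$ lies in $B\setminus A$, nothing forces any of $\alpha,\beta,\gamma,\delta,\omega$ to be nodes of the rendition of $(G[B],\Omega)$: they are simply vertices of $G[B]$ and can sit strictly in the interior of a single cell. In that situation the two crossing edges $\alpha\gamma$ and $\beta\delta$, together with the four diagonals $\omega x$, can all belong to one flap $\sigma(c)$ of a degree-$\le 3$ internal cell, and the rendition sees no interleaving whatsoever. Your key step, ``the ends of the two edges interleave along $\tr(D)$,'' therefore has no content in this regime --- those ends need not lie on $\tr(D)$ at all --- and the subsequent ``short case check'' about which side of $\tr(D)$ the cells occupy does not get off the ground. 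There is no topological obstruction coming from one brick alone; the obstruction in the $T\subset B\setminus A$ case genuinely requires $T$ to span multiple bricks so that the cross can be routed between pegs of different bricks. You should replace your Case 2 with an argument along the paper's lines: use the connectivity of $T$ to place it wholly in $B\setminus A$, then use the vertical edges of the three bricks together with $T$ to display two vertex-disjoint paths in $G[B]$ between interleaved $\Omega$-vertices, giving a cross and hence contradicting rurality. Your Case 1 is fine and essentially matches the paper's first case.
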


\begin{figure}[htb]
    \centering
    \scalebox{0.25} {
    \begin{tikzpicture}
      \def\y{0}
      \foreach \x in {\y, 10 + \y, 20 + \y, 30 + \y} {
        \draw (\x, \y) -- (10 + \x, \y);
        \draw (\x, 5 + \y) -- (10 + \x, 5 + \y);
        \draw (\x, \y) -- (\x, 5 + \y);
        \draw (10 + \x, \y) -- (10 + \x, 5 + \y);

        \draw (10 + \x, 5 + \y) -- (6 + \x, \y);
        \draw (10 + \x, 5 + \y) -- (7 + \x, \y);
        \draw (10 + \x, 5 + \y) -- (8 + \x, \y);
        \draw (10 + \x, 5 + \y) -- (9 + \x, \y);
  
        \draw (6 + \x, \y) to[out = -80, in = -100] (8 + \x, \y);
        \draw (7 + \x, \y) to[out = -80, in = -100] (9 + \x, \y);
  
        \filldraw [black] (\x, \y) circle (3pt);
        \filldraw [black] (5 + \x, \y) circle (3pt);
        \filldraw [black] (6 + \x, \y) circle (3pt);
        \filldraw [black] (7 + \x, \y) circle (3pt);
        \filldraw [black] (8 + \x, \y) circle (3pt);
        \filldraw [black] (9 + \x, \y) circle (3pt);
        \filldraw [black] (10 + \x, \y) circle (3pt);
        \filldraw [black] (\x, 5 + \y) circle (3pt);
        \filldraw [black] (5 + \x, 5 + \y) circle (3pt);
        \filldraw [black] (10 + \x, 5 + \y) circle (3pt);
      }
      \def\y{5}
      \foreach \x in {10 + \y, 20 + \y} {
        \draw (\x, \y) -- (10 + \x, \y);
        \draw [dashed] (\x, 5 + \y) -- (10 + \x, 5 + \y);
        \draw [dashed] (\x, \y) -- (\x, 5 + \y);
        \draw [dashed] (10 + \x, \y) -- (10 + \x, 5 + \y);

        \draw [dashed] (10 + \x, 5 + \y) -- (6 + \x, \y);
        \draw [dashed] (10 + \x, 5 + \y) -- (7 + \x, \y);
        \draw [dashed] (10 + \x, 5 + \y) -- (8 + \x, \y);
        \draw [dashed] (10 + \x, 5 + \y) -- (9 + \x, \y);
  
        \draw [dashed] (6 + \x, \y) to[out = -80, in = -100] (8 + \x, \y);
        \draw [dashed] (7 + \x, \y) to[out = -80, in = -100] (9 + \x, \y);
  
        \filldraw [black] (\x, \y) circle (3pt);
        \filldraw [black] (5 + \x, \y) circle (3pt);
        \filldraw [black] (10 + \x, \y) circle (3pt);
        \filldraw [black] (\x, 5 + \y) circle (3pt);
        \filldraw [black] (5 + \x, 5 + \y) circle (3pt);
        \filldraw [black] (10 + \x, 5 + \y) circle (3pt);
      }
      \foreach \x in {\y - 10, \y, 10 + \y, 20 + \y} {
        \filldraw [black] (6 + \x, \y) circle (3pt);
        \filldraw [black] (7 + \x, \y) circle (3pt);
        \filldraw [black] (8 + \x, \y) circle (3pt);
        \filldraw [black] (9 + \x, \y) circle (3pt);
      }
      \node at (-1, 5) {\Huge $D'$};
      \node at (14, 10) {\Huge $T$};
      \node at (35, 10.6) {\Huge $\omega$};
      \filldraw [black] (4, 5) circle (10pt);
      \filldraw [black] (11, 5) circle (10pt);
      \filldraw [black] (25, 5) circle (10pt);
      \filldraw [black] (33, 5) circle (10pt);
      \end{tikzpicture}
    }
    \caption{Section of top row of sub-counterwall $G'$, with two $X$-free bricks above}
    \label{fig:CeSubCounterwall}
\end{figure}
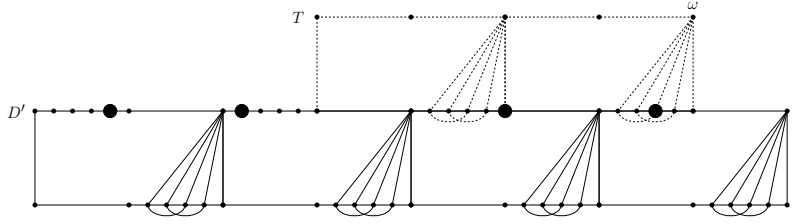

\begin{proof}
Let $W'$ be the wall of $G'$ and let $D'$ be its boundary.
Figure \ref{fig:CeSubCounterwall} shows a section of the top row of bricks of $G'$ with the rightmost two of the three $X$-free bricks above shown with dashed lines.

The section of $D'$ in the figure is the middle horizontal line and is marked with "$D'$", the dashed horizontal path above that is marked with "$T$",
and an example of a choice of pegs in the depicted section of $D'$ is marked with enlarged black circles.
The argument below is independent of any particular choice of pegs.

Assume that $W'$ is flat in $G \setminus X$. Then there is a separation $(A, B)$ of $G \setminus X$ such that:
\begin{itemize}
\item $V(W') \subseteq V(B)$
\item $A \cap B \subseteq V(D')$
\item $A \cap B$ contains a choice of pegs for $W'$.
\item When endowed with the circular order induced from $D'$, the society $(G[B], A \cap B)$ is rural.
\end{itemize}

The vertices along the path $T$ cannot belong to $A \cap B$ because $T$ is disjoint from $D'$.
Therefore, since $T \subset G \setminus X$, each vertex along $T$ must belong to $A \setminus B$ or to $B \setminus A$.
Since $(A, B)$ is a separation and $T$ is connected, either all the vertices of $T$ belong to $A \setminus B$ or all of them belong to $B \setminus A$.
\begin{enumerate}[C{a}se 1:]
\item All the vertices of $T$ belong to $A \setminus B$, and therefore $\omega$ belongs to $A \setminus B$. \\
Since the vertices $\alpha$, $\beta$, $\gamma$, $\delta$ in the top right brick belong to $B$ by assumption\footnote{labels not depicted in Figure \ref{fig:CeSubCounterwall}, see Figure \ref{fig:CeFullBrick}},
the edges $\omega \alpha, \dots, \omega \delta$ force these vertices to belong to $A \cap B$,
which is impossible since the society $(G[B], A \cap B)$ is assumed to be rural and yet it has a cross $\alpha \gamma, \beta \delta$.
\item All the vertices of $T$ belong to $B \setminus A$. \\
In this case as well there is a cross in $(G[B], A \cap B)$ as illustrated in Figure \ref{fig:CeBCross}\footnote{Technically, it is a cross only if the depicted peg choices are the rightmost choices in each brick.}.
Notice that a similar cross exists for any choice of pegs.
\end{enumerate}
\end{proof}

\begin{figure}[htb]
    \centering
    \scalebox{0.25} {
    \begin{tikzpicture}
      \def\y{0}
      \foreach \x in {\y, 10 + \y, 20 + \y, 30 + \y} {
        \draw (\x, \y) -- (10 + \x, \y);
        \draw (\x, 5 + \y) -- (10 + \x, 5 + \y);
        \draw (\x, \y) -- (\x, 5 + \y);
        \draw (10 + \x, \y) -- (10 + \x, 5 + \y);

        \draw (10 + \x, 5 + \y) -- (6 + \x, \y);
        \draw (10 + \x, 5 + \y) -- (7 + \x, \y);
        \draw (10 + \x, 5 + \y) -- (8 + \x, \y);
        \draw (10 + \x, 5 + \y) -- (9 + \x, \y);
  
        \draw (6 + \x, \y) to[out = -80, in = -100] (8 + \x, \y);
        \draw (7 + \x, \y) to[out = -80, in = -100] (9 + \x, \y);
  
        \filldraw [black] (\x, \y) circle (3pt);
        \filldraw [black] (5 + \x, \y) circle (3pt);
        \filldraw [black] (6 + \x, \y) circle (3pt);
        \filldraw [black] (7 + \x, \y) circle (3pt);
        \filldraw [black] (8 + \x, \y) circle (3pt);
        \filldraw [black] (9 + \x, \y) circle (3pt);
        \filldraw [black] (10 + \x, \y) circle (3pt);
        \filldraw [black] (\x, 5 + \y) circle (3pt);
        \filldraw [black] (5 + \x, 5 + \y) circle (3pt);
        \filldraw [black] (10 + \x, 5 + \y) circle (3pt);
      }
      \def\y{5}
      \foreach \x in {10 + \y, 20 + \y} {
        \draw (\x, \y) -- (10 + \x, \y);
        \draw [line width = 8pt] (\x, 5 + \y) -- (10 + \x, 5 + \y);
        \draw [dashed] (\x, \y) -- (\x, 5 + \y);
        \draw [dashed] (10 + \x, \y) -- (10 + \x, 5 + \y);

        \draw [dashed] (10 + \x, 5 + \y) -- (6 + \x, \y);
        \draw [dashed] (10 + \x, 5 + \y) -- (7 + \x, \y);
        \draw [dashed] (10 + \x, 5 + \y) -- (8 + \x, \y);
        \draw [dashed] (10 + \x, 5 + \y) -- (9 + \x, \y);
  
        \draw [dashed] (6 + \x, \y) to[out = -80, in = -100] (8 + \x, \y);
        \draw [dashed] (7 + \x, \y) to[out = -80, in = -100] (9 + \x, \y);
  
        \filldraw [black] (\x, \y) circle (3pt);
        \filldraw [black] (5 + \x, \y) circle (3pt);
        \filldraw [black] (10 + \x, \y) circle (3pt);
        \filldraw [black] (\x, 5 + \y) circle (3pt);
        \filldraw [black] (5 + \x, 5 + \y) circle (3pt);
        \filldraw [black] (10 + \x, 5 + \y) circle (3pt);
      }
      \foreach \x in {\y - 10, \y, 10 + \y, 20 + \y} {
        \filldraw [black] (6 + \x, \y) circle (3pt);
        \filldraw [black] (7 + \x, \y) circle (3pt);
        \filldraw [black] (8 + \x, \y) circle (3pt);
        \filldraw [black] (9 + \x, \y) circle (3pt);
      }
      \node at (-1, 5) {\Huge $D'$};
      \node at (14, 10) {\Huge $T$};
      \node at (35, 10.6) {\Huge $\omega$};

        \draw [line width = 8pt] (10 + \y, 5) -- (10 + \y, 5 + \y);
        \draw [line width = 8pt] (30 + \y, 5) -- (30 + \y, 5 + \y);
        \draw [line width = 8pt] (10, 0) -- (5 + \y, \y);
        \draw [line width = 8pt] (30, 0) -- (25 + \y, \y);
        \draw [line width = 8pt] (10, 0) -- (30, 0);
        \draw [line width = 8pt] (4, 5) -- (10, 5);
        \draw [line width = 8pt] (11, 5) -- (15, 5);
        \draw [line width = 8pt] (25, 5) -- (30, 5);
        \draw [line width = 8pt] (33, 5) -- (35, 5);

      \filldraw [black] (4, 5) circle (10pt);
      \filldraw [black] (11, 5) circle (10pt);
      \filldraw [black] (25, 5) circle (10pt);
      \filldraw [black] (33, 5) circle (10pt);
      \end{tikzpicture}
    }
    \caption{A cross along the peg choices of the boundary of $W'$}
    \label{fig:CeBCross}
\end{figure}
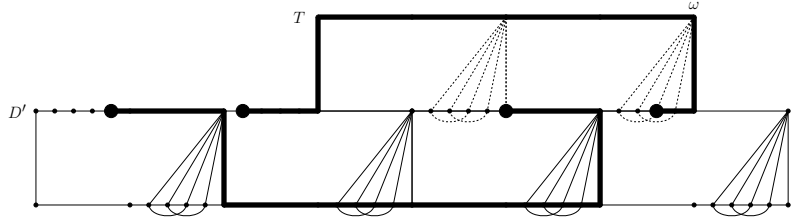

\begin{claim}
The Flat Wall Theorem is not correct as stated in Theorem 5.2 of \cite{NewProof}.
\end{claim}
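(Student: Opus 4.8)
The plan is to refute Theorem 5.2 of \cite{NewProof} directly, using the two preceding Claims. Fix $t=6$, let $r$ be a sufficiently large even integer, and let $R=49152\cdot 6^{24}(40\cdot 36+r)$ be the value the theorem prescribes for these $t,r$. For the host graph I would \emph{not} take $G_R$ itself but rather an $(R+1)$-counterwall $G$ built entirely from full bricks, and let $W$ be the $R$-wall of $G$ obtained as the wall of the $R$-sub-counterwall consisting of every row of $G$ except the topmost one. This shave is the one genuinely non-obvious move in the argument: it guarantees that \emph{every} $r$-subwall of $W$ has a full row of $G$'s full bricks lying directly above it, which is exactly the configuration that the second Claim requires.

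With this setup, the first alternative in the conclusion of Theorem 5.2 is unavailable. Indeed, the proof of the first Claim in fact applies to any (all-full-brick) mixed counterwall, so $G$ has no $K_6$ minor at all, and in particular no model of a $K_6$ minor grasped by $W$. Theorem 5.2 would therefore produce a set $A\subseteq V(G)$ with $|A|\le 12288\cdot 6^{24}$ and an $r$-subwall $W'$ of $W$ with $V(W')\cap A=\emptyset$ such that $W'$ is flat in $G\setminus A$ in the sense of \cite{NewProof}. Since a full brick has exactly the vertices of the corresponding wall brick, the $r$-subwall $W'$ is the wall of an $r$-sub-counterwall $G'$ of $G$ with $V(G')=V(W')$; hence $G'$ is $A$-free, and because $W'$ omits the topmost row of $G$ by construction, $G'$ is disjoint from the top horizontal path of $G$.

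Next I would locate, among the full bricks of $G$ lying in the row immediately above $G'$ and horizontally aligned with it, three consecutive ones that are $A$-free (their bottom paths then being subpaths of the boundary of $G'$, as the second Claim demands). That row contains $\Theta(r)$ bricks aligned with $G'$; since each vertex of $G$ lies in only boundedly many bricks, $A$ meets at most a bounded multiple of $12288\cdot 6^{24}$ of them, and a one-line pigeonhole argument on the maximal runs of $A$-free bricks shows that, once $r$ has been chosen large enough, some run has length at least three. All the hypotheses of the second Claim now hold with $X=A$ (taking its ``$R$'' to be $R+1$), and it yields that the wall $W'$ of $G'$ is \emph{not} flat in $G\setminus A$ according to the definition in \cite{NewProof}. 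This contradicts the pair $(A,W')$ extracted from Theorem 5.2, so Theorem 5.2 as stated in \cite{NewProof} is false.

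I expect the only point needing care to be the bookkeeping in the identification of an $r$-subwall of $W$ with an $r$-sub-counterwall $G'$ and the verification that the row of full bricks directly above $G'$ really does meet $G'$ along its boundary in the way the second Claim requires; deleting the top row of $G$ in the first place is precisely what makes that row available no matter where $W'$ sits inside $W$. Everything of substance is already contained in the two preceding Claims, and the remaining ingredients (the pigeonhole on $A$-free bricks, the fact that $R\gg r$, and $G$'s lack of a $K_6$ minor) are elementary.
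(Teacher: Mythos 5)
Your proposal follows essentially the same route as the paper: build $G$ from full bricks, take $W$ to be the wall of the $R$-sub-counterwall omitting the topmost row (so that a full row of bricks always sits above any $r$-subwall), eliminate the $K_t$ alternative via Claim 1, invoke Theorem 5.2 to produce $(A, W')$, identify $W'$ with the wall of an $r$-sub-counterwall $G'$, pigeonhole on $A$-free consecutive bricks in the row above $G'$, and apply Claim 2 with $X=A$ to obtain the contradiction. The only difference is cosmetic notation ($G$ vs.\ $G_{R+1}$) and that the paper pins down the requirement $r > 4 + 36864\,t^{24}$ explicitly where you leave the constant implicit, which is fine.
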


\begin{proof}
Let $r, t$ be integers with $r$ even, $t \ge 6$ and $r > 4 + 36864 t^{24}$. let $R = 49152 t^{24} (40t^2 + r)$.
Let $G$ be an $R$-sub-counterwall of $G_{R+1}$ that is disjoint from the top horizontal path of $G_{R+1}$.
Let $W$ be the wall of $G$.
According to 5.2, either $G_{R+1}$ has a model of a $K_t$ minor, or
there exist a set $A \subseteq V(G_{R+1})$ of size at most $12288t^{24}$ and an $r$-subwall $W'$ of $W$ such that $V(W') \cap A = \emptyset$ and $W'$ is a flat wall in $G_{R+1} \setminus A$.

Since $t \ge 6$, $G_{R+1}$ does not have a model of a $K_t$ minor, as we have shown, and therefore according to 5.2, $A$ and $W'$ exist as specified above.
Since $W'$ is a subwall of the wall of $G$, $W'$ is the wall of a sub-counterwall $G'$ of $G$.
The top row of $G'$ contains at least $4 + 36864 t^{24} = 1 + 3(1 + 12288 t^{24})$ bricks.
By our assumption there is a row of bricks in $G_{R+1}$ directly above that row, with a consecutive series of at least $3(1 + 12288 t^{24})$ bricks that are layered completely on top of $G'$.
By dividing that series into $\ge 1 + 12288 t^{24}$ blocks of $3$ consecutive bricks,
we can conclude that by the pigeon hole principle there are 3 consecutive $X$-free bricks that are layered completlely on top of $G'$,
and therefore as we have shown $W'$ is not flat in $G_{R+1}$, contrary to the claim of 5.2.
\end{proof}

\end{document}